\newtheorem{theorem}{Theorem}[section]
\newtheorem{corollary}[theorem]{Corollary}
\newtheorem{lemma}[theorem]{Lemma}
\newtheorem{proposition}[theorem]{Proposition}
\theoremstyle{definition}
\newtheorem{definition}[theorem]{Definition}
\newtheorem{example}[theorem]{Example}
\newtheorem{remark}[theorem]{Remark}
\DeclareMathOperator{\im}{im}
\DeclareMathOperator{\rank}{rank}
\DeclareMathOperator{\magn}{Mag}
\newcommand*{\addFileDependency}[1]{
	\typeout{(#1)}
	\@addtofilelist{#1}
	\IfFileExists{#1}{}{\typeout{No file #1.}}
}
\title{Persistent magnitude homology on finite metric spaces}
\author[1]{Wanying Bi}
\author[2]{Hongsong Feng}
\author[3]{Jingyan Li}
\author[3]{Jie Wu \thanks{Corresponding author: wujie@bimsa.cn}}
\affil[1]{School of Mathematical Sciences, Chongqing University of Technology, Chongqing 400054, China}
\affil[2]{Department of Mathematics and Statistics, University of North Carolina at Charlotte, Charlotte, NC 28223, USA}
\affil[3]{Yanqi Lake Beijing Institute of Mathematical Sciences and Applications, Beijing 101408, China}
\renewcommand*{\@fnsymbol}[1]{\ensuremath{\ifcase#1\or \dagger\or *\or *\or
		\mathsection\or \else\@ctrerr\fi}}
\date{}
\begin{document}
		\maketitle
		
		\paragraph{Abstract}

Magnitude homology is an emerging framework that captures the intrinsic topological and geometric features of metric spaces, demonstrating significant potential for topoplogical data analysis and geometric data analysis. This work introduces persistent magnitude homology, an extension of magnitude homology that captures multi-scale geometric and topological features of metric spaces. We construct the category of finite metric spaces with isometric embeddings and show that magnitude homology defines a functor to the category of abelian groups, naturally leading to the definition of persistent magnitude homology. We also introduce weighted persistent modules and weighted barcodes to offer both an algebraic and visual description of persistent magnitude homology. Additionally, we present an isometry theorem that relates interleaving distances and bottleneck distances, and establish stability results for persistent magnitude homology and magnitude profile. These results establish the stability of magnitude-based descriptors, bridging the gap between theory and practical application.

		\paragraph{Keywords}
	    Finite metric space; magnitude homology; persistent magnitude homology; weighted barcode; stability theorem.

\footnotetext[1]
{ {\bf 2020 Mathematics Subject Classification.}  	Primary  55N31; Secondary 55N31, 62R40.
}

\tableofcontents

\section{Introduction}\label{section:introduction}

Magnitude, introduced by Tom Leinster in 2013, is a numerical invariant that measures the ``size'' or ``complexity'' of metric spaces and enriched categories, generalizing the Euler characteristic and set cardinality~\cite{leinster2013magnitude}. It can be used to capture geometric and combinatorial properties like diversity, capacities, intrinsic dimensions, and asymptotic behaviors in Euclidean spaces~\cite{leinster2017magnitude,leinster2021magnitude,leinster2013asymptotic,meckes2015magnitude}. Later, magnitude was extended to graphs, quantifying their structural features using the shortest path metric~\cite{leinster2019magnitude}.

Hepworth and Willerton advanced the concept of magnitude by developing magnitude homology, a graded algebraic structure that categorifies magnitude and recovers it as its Euler characteristic~\cite{hepworth2017categorifying}. Leinster and Shulman introduced magnitude homology for enriched categories and metric spaces in 2021 \cite{leinster2021magnitude}, and then Hepworth extended this concept to magnitude cohomology \cite{hepworth2022magnitude}. Recent developments in magnitude homology have explored its applications to metric spaces, order complexes, and graphs, as well as its connections to torsion, girth, and phase transitions \cite{asao2024girth,kaneta2021magnitude,sazdanovic2021torsion}.
The study of magnitude homology and its relationship to path homology was further explored in 2023 \cite{asao2023magnitude}. In \cite{bi2024magnitude}, the authors introduced the magnitude homology of hypergraphs and the K\"{u}nneth formula for their magnitude homology.

When describing data, magnitude is particularly suited for capturing structural objects such as molecules, proteins, networks, and so on \cite{bi2025topological, bi2024persistent}. However, as it reflects a global characteristic, its application is inherently limited. Persistent homology is a relatively new approach that transforms coarse topological features into more detailed multi-scale topological features, enabling effective data analysis \cite{edelsbrunner2008persistent,zomorodian2004computing}. The study of the persistence of magnitude homology has remained a challenging problem. Existing research on the persistence of magnitude and its connection to persistent homology has been conducted \cite{govc2021persistent,otter2022magnitude}. However, this notion of persistent magnitude does not represent true topological or geometric persistence, as it does not involve the filtration of the space or the data objects themselves.

In this work, we focus on the concept of persistent magnitude homology. First, we construct the category $\mathbf{FinMet}_{iso}$ of finite metric spaces with morphisms given by isometric embeddings. We show that (see Theorem \ref{theorem:functor})
\begin{theorem}
For a fixed non-negative integer $k \geq 0$ and length parameter $l \geq 0$, the magnitude homology $\mathrm{MH}_{k, l}(-): \mathbf{FinMet}_{iso} \to \mathbf{Ab}$ is a functor.
\end{theorem}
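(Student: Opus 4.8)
The plan is to exhibit the object and morphism assignments explicitly and then verify the two functoriality axioms. On objects, $\mathrm{MH}_{k,l}$ sends a finite metric space $X$ to the abelian group $\mathrm{MH}_{k,l}(X) = H_k(\mathrm{MC}_{*,l}(X))$, the $k$-th homology of the length-$l$ part of the magnitude chain complex. Recall that $\mathrm{MC}_{k,l}(X)$ is the free abelian group on tuples $(x_0, \dots, x_k)$ with consecutive entries distinct and total length $\sum_{i=0}^{k-1} d(x_i, x_{i+1}) = l$, and that the differential is the alternating sum of the inner face maps $\partial_i$ ($1 \le i \le k-1$), where $\partial_i$ deletes $x_i$ and the resulting term is retained only when the deletion preserves the total length, i.e. when $d(x_{i-1}, x_i) + d(x_i, x_{i+1}) = d(x_{i-1}, x_{i+1})$, and is set to zero otherwise.

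On morphisms, given an isometric embedding $f \colon X \hookrightarrow Y$, I define $f_{\#} \colon \mathrm{MC}_{k,l}(X) \to \mathrm{MC}_{k,l}(Y)$ on generators by $(x_0, \dots, x_k) \mapsto (f(x_0), \dots, f(x_k))$ and extend linearly. The first step is to check this lands in $\mathrm{MC}_{k,l}(Y)$: injectivity of $f$ guarantees that consecutive entries remain distinct, and the isometry property guarantees $\sum_i d(f(x_i), f(x_{i+1})) = \sum_i d(x_i, x_{i+1}) = l$, so $f_{\#}$ is well defined on each length-$l$ piece.

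The key step is to verify that $f_{\#}$ is a chain map, i.e. $\partial \circ f_{\#} = f_{\#} \circ \partial$. It suffices to compare the face maps $\partial_i$ termwise. Because $f$ preserves all pairwise distances, the equality $d(x_{i-1}, x_i) + d(x_i, x_{i+1}) = d(x_{i-1}, x_{i+1})$ holds in $X$ if and only if the corresponding equality holds for $f(x_{i-1}), f(x_i), f(x_{i+1})$ in $Y$; hence the $i$-th face of $(x_0, \dots, x_k)$ survives precisely when the $i$-th face of its image survives, and in that case $\partial_i f_{\#}(x_0, \dots, x_k) = f_{\#}\partial_i(x_0, \dots, x_k)$. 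Taking the signed sum gives $\partial f_{\#} = f_{\#}\partial$. Consequently $f_{\#}$ descends to a homomorphism $\mathrm{MH}_{k,l}(f) := H_k(f_{\#}) \colon \mathrm{MH}_{k,l}(X) \to \mathrm{MH}_{k,l}(Y)$.

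Finally I would verify the axioms. The chain map induced by $\mathrm{id}_X$ is the identity of $\mathrm{MC}_{*,l}(X)$, so $\mathrm{MH}_{k,l}(\mathrm{id}_X) = \mathrm{id}$; and for composable isometric embeddings $f \colon X \to Y$, $g \colon Y \to Z$ one has $(g \circ f)_{\#} = g_{\#} \circ f_{\#}$ on generators, so functoriality of $H_k(-)$ on chain complexes yields $\mathrm{MH}_{k,l}(g \circ f) = \mathrm{MH}_{k,l}(g) \circ \mathrm{MH}_{k,l}(f)$. I do not anticipate a genuine obstacle: the only subtle point is the bookkeeping for the degenerate faces in the magnitude differential, and this is handled cleanly by the observation that isometric embeddings preserve exactly the metric equalities that govern which faces survive. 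The remainder is checking that the standard ``pushforward of tuples'' construction respects all the relevant structure.
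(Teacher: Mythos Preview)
Your proposal is correct and follows essentially the same approach as the paper's proof: define the pushforward on magnitude chains by applying $f$ coordinatewise, check well-definedness using injectivity and the isometry property, verify the chain map condition, and then appeal to functoriality of homology for identities and compositions. If anything, your treatment of the chain-map step is slightly more careful than the paper's, since you explicitly observe that the metric equality governing whether a face $\partial_i$ survives is preserved by an isometric embedding, whereas the paper's computation glosses over this case distinction.
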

This result naturally leads to the definition of persistent magnitude homology. Based on persistent magnitude homology, we define persistent magnitude and show that it remains invariant throughout the persistence process.

On the other hand, we note that magnitude homology has two indices, one corresponding to the homological dimension and the other to the length of the homology generators. Motivated by this structure, we introduce the concepts of weighted persistence modules and weighted barcodes. These notions provide a robust algebraic  characterization and a corresponding visual representation for persistent magnitude homology. A central result of our study is the isometry theorem (Theorem \ref{theorem:isometry}), which establishes the theoretical equivalence between the bottleneck distance of weighted barcodes and the interleaving distance of weighted persistence modules:
\begin{theorem}
Let $\mathcal{V}$ and $\mathcal{W}$ be two pointwise finite-dimensional weighted persistence modules with associated weighted barcodes $\mathcal{B}_{\mathcal{V}}$ and $\mathcal{B}_{\mathcal{W}}$, respectively. Then
$$ d_B(\mathcal{B}_{\mathcal{V}}, \mathcal{B}_{\mathcal{W}}) = d_I(\mathcal{V}, \mathcal{W}). $$
\end{theorem}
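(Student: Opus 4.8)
The plan is to adapt the classical isometry theorem for one-parameter persistence modules (the algebraic stability theorem of Chazal--Cohen-Steiner--Glisse--Guibas--Oudot and Lesnick) to the weighted setting, where the extra ``weight'' index (here recording the length grading of magnitude homology) is carried along passively. The key observation is that a weighted persistence module should decompose, by a pointwise-finite-dimensional structure theorem, as a direct sum of weighted interval modules, each of which is an ordinary interval module equipped with a single weight label; the weighted barcode $\mathcal{B}_\mathcal{V}$ is then the multiset of these (interval, weight) pairs. The bottleneck distance $d_B$ on weighted barcodes must match intervals \emph{only} to intervals carrying the same weight (or to the empty interval), so the matching problem splits as a disjoint union over weight values of ordinary bottleneck matching problems.

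First I would establish the decomposition theorem: a pointwise finite-dimensional weighted persistence module is isomorphic to a direct sum of weighted interval modules. This should follow from the ordinary structure theorem applied weight-by-weight, using that the weight grading is respected by all structure maps so the module splits as a direct sum over weights before one ever invokes the interval decomposition. Next I would prove the easy inequality $d_B(\mathcal{B}_\mathcal{V}, \mathcal{B}_\mathcal{W}) \le d_I(\mathcal{V}, \mathcal{W})$: given an $\varepsilon$-interleaving, I restrict attention to each weight value separately; because interleaving morphisms are weight-preserving, an $\varepsilon$-interleaving of $\mathcal{V}$ and $\mathcal{W}$ induces an $\varepsilon$-interleaving of the weight-$w$ summands $\mathcal{V}^w$ and $\mathcal{W}^w$ for every $w$, and the classical isometry theorem yields an $\varepsilon$-matching of the corresponding barcodes; assembling these over all $w$ gives an $\varepsilon$-matching of weighted barcodes. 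For the reverse inequality $d_I(\mathcal{V}, \mathcal{W}) \le d_B(\mathcal{B}_\mathcal{V}, \mathcal{B}_\mathcal{W})$, I take an $\varepsilon$-matching of weighted barcodes, observe it is a disjoint union of $\varepsilon$-matchings of the weight-$w$ sub-barcodes, use the classical construction to build an $\varepsilon$-interleaving $\mathcal{V}^w \rightleftarrows \mathcal{W}^w$ for each $w$, and take the direct sum over $w$ to obtain an $\varepsilon$-interleaving $\mathcal{V} \rightleftarrows \mathcal{W}$; taking infima over $\varepsilon$ gives the inequality.

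The main obstacle I anticipate is bookkeeping around the weight grading rather than any genuinely new analytic difficulty: I must be careful that the definitions of weighted persistence module, weighted morphism, weighted interleaving, and weighted bottleneck distance are all set up so that everything is strictly graded by weight with no ``mixing'' of weights allowed — if weight-changing morphisms were permitted the splitting would fail and the statement would be false. A secondary technical point is handling infinitely many weight values and intervals of infinite ``size'': I would note that pointwise finite-dimensionality forces, on each fixed parameter value, only finitely many (interval, weight) pairs to be active, which is enough to make the direct-sum-over-weights argument and the matching constructions go through; for the bottleneck distance to be finite one either restricts to a setting with boundedly many weights or allows $d_B = \infty$, and the isometry identity holds in the extended reals regardless. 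Once the reduction to the classical one-parameter case is in place, all the hard work is quoted, and what remains is to verify that the direct-sum and restriction operations interact correctly with interleavings and matchings, which is routine.
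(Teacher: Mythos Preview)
Your proposal is correct and follows essentially the same route as the paper: both reduce the weighted isometry theorem to the classical one-parameter isometry theorem applied weight-by-weight, using that weight-preserving interleavings and weight-preserving matchings each split as a family indexed by $w$, so that $d_I(\mathcal{V},\mathcal{W}) = \sup_w d_I(\mathcal{V}_w,\mathcal{W}_w)$ and $d_B(\mathcal{B}_{\mathcal{V}},\mathcal{B}_{\mathcal{W}}) = \sup_w d_B(\mathcal{B}_{\mathcal{V}}^w,\mathcal{B}_{\mathcal{W}}^w)$. The paper packages the interleaving-splitting step as a separate lemma and then quotes Bauer--Lesnick, whereas you argue the two inequalities directly, but the underlying content is identical.
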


Finally, for a point set $X$, we construct a subset $N_r(z)=\{x\in X \mid \|x-z\|\leq r\}$ of $X$ centered at a given point $z$, with distances to $z$ bounded by $r$. This construction yields a persistent finite metric space particularly suited for practical multiscale analysis. Specifically, for a non-decreasing function $f:\mathbb{R}\to \mathbb{R}^{+}$, we denote its generalized inverse as 
\[
f^{-1}(y) = \inf \{ x \in \mathbb{R} \mid f(x) \geq y \}.
\]
Then for a given point set $X$ and a fixed point $z$ in the space, we obtain a persistence finite metric space
\[
N(z,f) = N_{f^{-1}(-)}(z): (\mathbb{R}, \leq) \longrightarrow \mathbf{FinMet}_{iso}.
\]
This stability is formalized in the following theorem (see Theorem \ref{theorem:stability_composition}):
\begin{theorem}\label{theorem:stability_composition}
Let $f,g: \mathbb{R} \to \mathbb{R}^{+}$ be two non-decreasing functions. If $f$ is convex, we have
\[
  d_B(\mathcal{B}(N(z,f)), \mathcal{B}(N(z',g))) \leq \| f - g \|+ f(\| z - z' \|).
\]
\end{theorem}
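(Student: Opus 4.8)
The plan is to route everything through the interleaving distance and then exhibit a single explicit interleaving.

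\textbf{Reduction.} First I would apply the Isometry Theorem (Theorem~\ref{theorem:isometry}) to trade the bottleneck distance on weighted barcodes for the interleaving distance on the weighted persistence modules, so that it suffices to prove
\[
d_I\bigl(\mathrm{MH}(N(z,f)),\,\mathrm{MH}(N(z',g))\bigr)\ \le\ \|f-g\|+f(\|z-z'\|)\ =:\ \varepsilon.
\]
Since $\mathrm{MH}_{k,l}(-): \mathbf{FinMet}_{iso}\to\mathbf{Ab}$ is a functor for every $k$ and $l$ (Theorem~\ref{theorem:functor}), it takes a diagram of isometric embeddings to a diagram of abelian groups; applying it for all $k,l$ and assembling the weighted module (with weight $l$), any $\varepsilon$-interleaving of the persistent finite metric spaces $N(z,f)$ and $N(z',g)$ --- that is, natural families of isometric embeddings $N_{f^{-1}(t)}(z)\hookrightarrow N_{g^{-1}(t+\varepsilon)}(z')$ and $N_{g^{-1}(t)}(z')\hookrightarrow N_{f^{-1}(t+\varepsilon)}(z)$, compatible with the filtration inclusions and composing to them --- is carried to an $\varepsilon$-interleaving of weighted persistence modules. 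This is precisely the stability principle for persistent magnitude homology obtained above. So the whole problem collapses to building such an interleaving of persistent finite metric spaces.

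\textbf{The interleaving.} I would take every comparison map to be a genuine inclusion of subsets of the ambient point set: then it is an isometric embedding for the restricted metric for free, and naturality together with the two interleaving identities hold strictly, since composites of inclusions are inclusions. It remains only to check the set inclusion $N_{f^{-1}(t)}(z)\subseteq N_{g^{-1}(t+\varepsilon)}(z')$ and its mirror, for all $t$. For the first I would chain three elementary facts: (i) the triangle inequality on radii, $\|x-z\|\le r\Rightarrow\|x-z'\|\le r+\|z-z'\|$, hence $N_r(z)\subseteq N_{r+\|z-z'\|}(z')$; (ii) the comparison of generalized inverses coming from $|f-g|\le\|f-g\|$, namely $f^{-1}(y-\|f-g\|)\le g^{-1}(y)\le f^{-1}(y+\|f-g\|)$, which gives $g^{-1}(t+\varepsilon)\ge f^{-1}\bigl(t+f(\|z-z'\|)\bigr)$; and (iii) the convexity estimate $f^{-1}(s)+\|z-z'\|\le f^{-1}\bigl(s+f(\|z-z'\|)\bigr)$ for all $s$, valid because $f$ is convex. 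Concatenating, $N_{f^{-1}(t)}(z)\subseteq N_{f^{-1}(t)+\|z-z'\|}(z')\subseteq N_{f^{-1}(t+f(\|z-z'\|))}(z')\subseteq N_{g^{-1}(t+\varepsilon)}(z')$. The mirror inclusion $N_{g^{-1}(t)}(z')\subseteq N_{f^{-1}(t+\varepsilon)}(z)$ goes the same way --- triangle inequality to move the center to $z$, then $g^{-1}(t)\le f^{-1}(t+\|f-g\|)$ from (ii), then the very same convexity estimate (iii) applied at $s=t+\|f-g\|$ --- so the single estimate (iii) powers both directions, which is exactly why only $f$ (not $g$) need be convex and why the bound features $f(\|z-z'\|)$.

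\textbf{Expected obstacle.} Everything except (iii) is bookkeeping once the Isometry Theorem and the functoriality of $\mathrm{MH}_{k,l}$ are in hand. The genuine work is the convexity estimate: one must handle the generalized inverse carefully --- its one-sided continuity and the Galois-type relation between $f$ and $f^{-1}$, e.g.\ $f(x)\ge y\Rightarrow x\ge f^{-1}(y)$ with the near-converse under left/right limits --- and then use convexity of $f$ to show that displacing the ball's radius by $\|z-z'\|$ costs no more than a shift of the scale parameter by $f(\|z-z'\|)$. That step, together with the edge cases where $f^{-1}$ takes the values $\pm\infty$ so that $N_{f^{-1}(t)}(z)$ is empty or all of $X$, is where I would expect essentially all the effort to go.
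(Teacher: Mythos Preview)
Your argument is correct. The ingredients you isolate --- the triangle inequality on radii, the comparison $g^{-1}(y)\ge f^{-1}(y-\|f-g\|)$, and the convexity estimate $f^{-1}(s)+\|z-z'\|\le f^{-1}(s+f(\|z-z'\|))$ --- are exactly the ones underlying the paper's development, and your chaining of them into a single $\varepsilon$-interleaving with $\varepsilon=\|f-g\|+f(\|z-z'\|)$ is sound in both directions.

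The paper organizes things differently. It proves two separate stability results first: Theorem~\ref{theorem:stability_radius} gives $d_B(\mathcal{B}(N(z',f)),\mathcal{B}(N(z',g)))\le\|f-g\|$ (same center, different functions), and Theorem~\ref{theorem:stability_center} gives $d_B(\mathcal{B}(N(z,f)),\mathcal{B}(N(z',f)))\le f(\|z-z'\|)$ (same convex function, different centers). The proof of Theorem~\ref{theorem:stability_composition} is then a single line: apply the triangle inequality for $d_B$. Your approach instead fuses the two interleavings into one, bypassing the triangle inequality at the barcode level. The paper's route is more modular --- the two intermediate theorems are of independent interest and the final step is immediate --- while yours is more self-contained and makes explicit that a single genuine interleaving of filtered point sets realizes the bound, which is slightly stronger information than what the triangle inequality alone provides.
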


The stability of magnitude breaks down in dense configurations, as the invariant becomes singular when the pairwise distance between points vanishes. To address this, we restrict our analysis to the thick configuration space
\[
  \mathcal{C}_n^{\delta}(\mathbb{R}^d) = \left\{ (x_1, \dots, x_n) \in (\mathbb{R}^d)^n \mid \|x_i - x_j\| \ge \delta \text{ for } i \neq j \right\},
\]
which ensures that any two distinct points in the set maintain a minimum separation distance $\delta$. Without loss of generality, we assume the point sets are centered at the origin and contained within a disk of radius $L$. Under these geometric constraints, magnitude perturbations become controllable.

Specifically, we investigate the magnitude profile of a persistent metric space, defined as $\mathrm{Mag}_X(r) := \mathrm{Mag}(N_r(X))$. By employing the $L^1([0,L])$ metric to measure the distance between profiles and $\infty$-Wasserstein metric between point sets,  we establish the following stability result (Theorem \ref{theorem:stability}):
\begin{theorem}
Let $X, Y \in \mathcal{C}_n^{\delta}(\mathbb{R}^d)$ be two finite point sets contained within a disk of radius $L$. Then
\[
  d_{L}(\mathrm{Mag}_X, \mathrm{Mag}_Y) \leq K_{n,d,L,\delta} \cdot d_{W,\infty}(X, Y),
\]
where $K_{n,d,L,\delta}$ is a constant depending on the cardinality $n$, dimension $d$, radius $L$, and separation threshold $\delta$.
\end{theorem}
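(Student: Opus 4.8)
The plan is to reduce the $L^1$-stability of the magnitude profile to a uniform pointwise Lipschitz estimate for the magnitude of finite Euclidean point sets, and then integrate over the threshold parameter $r\in[0,L]$. Recall that a finite metric space $Z=\{z_1,\dots,z_m\}$ has similarity matrix $\zeta_Z=\big(e^{-d(z_i,z_j)}\big)_{i,j}$, and, since finite subsets of $\mathbb{R}^d$ are positive definite metric spaces, $\zeta_Z$ is positive definite, hence invertible, with $\mathrm{Mag}(Z)=\mathbf{1}^{\top}\zeta_Z^{-1}\mathbf{1}$. The technically central step is to establish a uniform bound $\|\zeta_Z^{-1}\|\le C$ in operator norm, with $C=C(n,d,L,\delta)$, valid for every $Z$ that arises as a subset of a configuration in $\mathcal{C}_n^{\delta}(\mathbb{R}^d)$ contained in the disk of radius $L$ about the origin. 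I would obtain this by compactness: the distance matrices of all such $Z$ form a compact subset of matrix space, the function $D\mapsto\lambda_{\min}(\zeta)$ is continuous and, by positive definiteness of the kernel $e^{-\|x-y\|}$ on $\mathbb{R}^d$, strictly positive throughout, so it attains a strictly positive minimum; take $C$ to be its reciprocal. The same bound also gives $|\mathrm{Mag}(Z)|\le\|\mathbf{1}\|^{2}\|\zeta_Z^{-1}\|\le nC$ for all such $Z$, which will control the discontinuities.

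Granting the uniform bound, the pointwise estimate is elementary. If $Z$ and $Z'$ are carried by point sets matched so that $\max_i\|z_i-z_i'\|\le\eta$, then $|d(z_i,z_j)-d(z_i',z_j')|\le 2\eta$, and since $|e^{-a}-e^{-b}|\le|a-b|$ for $a,b\ge 0$ we get $\|\zeta_Z-\zeta_{Z'}\|\le 2n\eta$. The resolvent identity $\zeta_Z^{-1}-\zeta_{Z'}^{-1}=\zeta_Z^{-1}(\zeta_{Z'}-\zeta_Z)\zeta_{Z'}^{-1}$ then gives $\|\zeta_Z^{-1}-\zeta_{Z'}^{-1}\|\le 2nC^2\eta$, so
\[
  |\mathrm{Mag}(Z)-\mathrm{Mag}(Z')|\le\|\mathbf{1}\|^{2}\,\|\zeta_Z^{-1}-\zeta_{Z'}^{-1}\|\le 2n^2C^2\,\eta .
\]

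To pass to the profiles, fix a bijection realizing $d_{W,\infty}(X,Y)$, write $\epsilon=d_{W,\infty}(X,Y)$, and relabel the points so that $\|x_i-y_i\|\le\epsilon$ for every $i$. For a threshold $r$, the finite metric space $N_r(X)$ is carried by the index set $A(r)=\{i:\|x_i\|\le r\}$ and $N_r(Y)$ by $B(r)=\{i:\|y_i\|\le r\}$ (distances measured to the distinguished centre, which we take to be the origin). Because $\big|\,\|x_i\|-\|y_i\|\,\big|\le\epsilon$, we have $A(r)=B(r)$ for every $r$ outside a set $E\subseteq[0,L]$ which is a union of at most $n$ intervals of length $\le\epsilon$, so $|E|\le n\epsilon$. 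For $r\notin E$ the two neighbourhoods carry the same index set and their points are matched within $\epsilon$, so the displayed inequality yields $|\mathrm{Mag}_X(r)-\mathrm{Mag}_Y(r)|\le 2n^2C^2\epsilon$; for $r\in E$ we use the crude bound $|\mathrm{Mag}_X(r)-\mathrm{Mag}_Y(r)|\le 2nC$. Integrating over $[0,L]$,
\[
  d_L(\mathrm{Mag}_X,\mathrm{Mag}_Y)=\int_0^L|\mathrm{Mag}_X(r)-\mathrm{Mag}_Y(r)|\,dr\le 2n^2C^2L\,\epsilon+2nC\cdot n\epsilon=K_{n,d,L,\delta}\,\epsilon ,
\]
with $K_{n,d,L,\delta}=2n^2C^2L+2n^2C$. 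Since this holds for the optimal matching, $d_L(\mathrm{Mag}_X,\mathrm{Mag}_Y)\le K_{n,d,L,\delta}\,d_{W,\infty}(X,Y)$, which is the claim.

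The main obstacle is the uniform bound $\|\zeta_Z^{-1}\|\le C$: if either the separation $\delta$ or the radius $L$ is dropped, $\lambda_{\min}(\zeta_Z)$ can degenerate to $0$ (two points collide, or the diameter blows up and $\zeta_Z$ tends to the all-ones matrix), so both hypotheses of the theorem are used precisely here, and exhibiting $C$ explicitly as a function of $(n,d,L,\delta)$ — rather than merely asserting its existence by compactness — would require genuine lower bounds on $\lambda_{\min}(\zeta_Z)$ in terms of the geometry. A minor secondary point is the behaviour near $r=0$ and at the $\le n$ thresholds where $N_r(X)$ acquires a new point, where $\mathrm{Mag}_X$ is discontinuous; but those thresholds fill only a set of measure $O(\epsilon)$ and $|\mathrm{Mag}_X|$ stays bounded by $nC$, so the $L^1$ norm absorbs the jumps.
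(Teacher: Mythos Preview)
Your proof is correct and follows the same architecture as the paper's: partition $[0,L]$ into a ``good'' region where $N_r(X)$ and $N_r(Y)$ are carried by matched index sets, bound the magnitude difference there via the resolvent identity and a uniform control on $\|\zeta^{-1}\|$, and absorb the complementary region of measure $O(n\epsilon)$ using a crude uniform bound on $|\mathrm{Mag}|$. The one meaningful difference is the source of the bound $\|\zeta_Z^{-1}\|\le C$: you obtain it by compactness of the space of $\delta$-separated configurations in the radius-$L$ ball, which is clean but non-explicit, whereas the paper invokes the Narcowich--Ward/Wendland lower eigenvalue estimate for radial-basis interpolation matrices to get $\|\zeta_Z^{-1}\|_2\le C_d/\delta$ with a constant depending only on $d$ and $\delta$; combined with the elementary $\mathrm{Mag}\le n$ (in place of your $nC$), this yields the explicit final constant $K=C_{n,d}L/\delta+2n^2$ and makes the blow-up as $\delta\to 0$ visible. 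Your compactness argument is entirely sufficient for the theorem as stated.

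One small slip in your closing commentary: when the diameter blows up the similarity matrix tends to the identity, not to the all-ones matrix, so $\lambda_{\min}\to 1$; it is collision ($\delta\to 0$) that sends $\zeta_Z$ toward the rank-one all-ones matrix and kills $\lambda_{\min}$. Thus $L$ is not actually needed for the inverse bound itself, only for the compactness of your minimisation domain and for the integration length---which is consistent with the paper's $C_d/\delta$ depending on $\delta$ but not on $L$.
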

This theorem implies that in practical applications of magnitude theory, a certain degree of spatial dispersion is required to ensure the robustness of the extracted geometric features.

The paper is organized as follows. In the next section, we introduce persistent magnitude homology. Section \ref{section:representation} explores weighted persistent modules and weighted barcodes. In the final section, we present the stability theorems of magnitude invariants.

\section{Persistent magnitude homology}

Magnitude is a topological invariant that encodes the size or scale of finite metric spaces, or more specifically, the size of finite point sets and finite graphs.
For a finite point set $X$, the magnitude of $X$ is a measure that captures the intrinsic geometric properties of $X$, such as the pairwise distances between points and the global structure of the set.

\subsection{Magnitude of finite metric spaces}

A \textit{finite metric space} is a pair $(X, d)$, where $X$ is a finite set and $d: X \times X \to \mathbb{R}_{\geq}$ is a metric. These spaces are finite in cardinality and provide a discrete, compact structure.

Let $(X, d)$ be a finite metric space, where $X = \{x_1, x_2, \dots, x_n\}$ is a set of $n$ points. Define the similarity matrix $Z\in \mathbb{R}^{n \times n}$ with entries
\[
Z_{ij} = e^{-d(x_i, x_j)},
\]
where $d(x_i, x_j)$ is the distance between points $x_i$ and $x_j$. Suppose that $Z$ is invertible. The \textit{magnitude} of $X$ is given by
\[
\magn(X) = \mathbf{1}^{T} Z^{-1} \mathbf{1} = \sum_{i=1}^n \sum_{j=1}^n (Z^{-1})_{ij},
\]
where $\mathbf{1}$ is the all-ones vector in $\mathbb{R}^n$, and $\mathbf{1}^{T}$ is the transpose of $\mathbf{1}$.

Equivalently, if there exists a \textit{weighting} $w: X \to \mathbb{R}$ such that
\[
\sum_{y \in X} e^{-d(x, y)} w(y) = 1
\]
for all $x \in X$, then the magnitude is
\[
\magn(X) = \sum_{x \in X} w(x).
\]

The construction of magnitude homology begins with the chain groups $\mathrm{MC}_{k,l}(X)$, defined as the free abelian group generated by $(k+1)$-tuples $(x_0, x_1, \dots, x_k) \in X^{k+1}$ satisfying
\[
d(x_0, x_1) + d(x_1, x_2) + \cdots + d(x_{k-1}, x_k) = l
\]
for some $l \in \mathbb{R}_{\geq 0}$, and the non-degeneracy condition $x_i \neq x_{i+1}$ for all $i = 0, \dots, k-1$. These tuples represent ``paths'' in the metric space with a fixed total length, excluding consecutive repeated points.

For an element $\sigma=(x_0, x_1, \dots, x_k)$, we define
\begin{equation*}
  \ell(\sigma)=d(x_0, x_1) + d(x_1, x_2) + \cdots + d(x_{k-1}, x_k).
\end{equation*}
Then the boundary operator is given by
\[
  d_{k,l}: \mathrm{MC}_{k,l}(X) \to \mathrm{MC}_{k-1,l}(X), \quad d_{k,l} = \sum_{i=0}^k (-1)^i \partial_{i,l},
\]
where
\begin{equation*}
  \partial_{i}(x_0, \dots, x_k) = \left\{
                                    \begin{array}{ll}
                                      (x_0, \dots, \widehat{x}_i, \dots, x_k), & \hbox{$\ell(x_0, \dots, \widehat{x}_i, \dots, x_k)=l$;} \\
                                      0, & \hbox{otherwise.}
                                    \end{array}
                                  \right.
\end{equation*}
Here, $\widehat{x}_i$ denotes the omission of the $i$-th point.

The operator satisfies $d_{k-1} \circ d_k = 0$, forming a chain complex $(\mathrm{MC}_{\ast}(X), d_{\ast})$.

\begin{definition}
The \textit{magnitude homology} is defined by
\[
\mathrm{MH}_{k,l}(X) = \frac{\ker (d_{k,l}: \mathrm{MC}_k(X) \to \mathrm{MC}_{k-1}(X))}{\mathrm{im} (d_{k+1,l}: \mathrm{MC}_{k+1}(X) \to \mathrm{MC}_{k,l}(X))}.
\]
\end{definition}
The magnitude homology group is bigraded by homological degree $k$ and length $l$. The magnitude $\magn(X)$ is recovered by the magnitude homology
\begin{equation*}
   \magn(X) = \sum_{l \geq 0} \chi \left(\mathrm{MH}_{\ast,l}(X)\right) \, e^{-l},
\end{equation*}
where
\begin{equation*}
  \chi \left(\mathrm{MH}_{\ast,l}(X)\right) = \sum_{k=0}^\infty (-1)^k \,\rank \big(\mathrm{MH}_{k,l}(X)\big).
\end{equation*}
The $0$-dimensional magnitude homology reflects the discrete points in a finite metric space $X$. The $1$-dimensional magnitude homology, denoted $\mathrm{MH}_{1,l}$, characterizes the cycles of length $l$ that cannot be filled. The $2$-dimensional magnitude homology, $\mathrm{MH}_{2,l}$, describes the cycles formed by paths corresponding to triples of points $(x_0, x_1, x_2)$ of length $l$.

Magnitude homology provides a richer invariant than magnitude alone, capturing higher-order geometric and topological structure.

\subsection{Persistent magnitude homology}

Consider the category $\mathbf{FinMet}_{iso}$ of finite metric spaces and isometric embeddings between them. The objects of this category are finite metric spaces $(X, d)$, where $X$ is a finite set and $d: X \times X \to \mathbb{R}_{\geq 0}$ is a metric on $X$. The morphisms in this category are isometric embeddings. Specifically, an isometric embedding is an injective map $f: (X, d_X) \to (Y, d_Y)$ between finite metric spaces such that for all $x, x' \in X$, the distance between $f(x)$ and $f(x')$ in $Y$ is the same as the distance between $x$ and $x'$ in $X$, i.e., $d_Y(f(x), f(x')) = d_X(x, x')$.

\begin{theorem}\label{theorem:functor}
For a fixed non-negative integer $k \geq 0$ and length parameter $l \geq 0$, the magnitude homology $\mathrm{MH}_{k, l}(-): \mathbf{FinMet}_{iso} \to \mathbf{Ab}$ is a functor from the category of finite metric spaces with isometric embeddings to the category of abelian groups.
\end{theorem}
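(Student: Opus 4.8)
The plan is to promote the obvious assignment ``apply $f$ coordinatewise to a tuple of points'' to a map of chain complexes, and then pass to homology. Given a morphism $f\colon (X,d_X)\to (Y,d_Y)$ in $\mathbf{FinMet}_{iso}$, I would first define a map on chain groups
\[
  f_{\#}\colon \mathrm{MC}_{k,l}(X)\longrightarrow \mathrm{MC}_{k,l}(Y),\qquad (x_0,\dots,x_k)\longmapsto (f(x_0),\dots,f(x_k)),
\]
extended freely. The point to check is that this actually lands in $\mathrm{MC}_{k,l}(Y)$: because $f$ is an isometric embedding, $d_Y(f(x_i),f(x_{i+1}))=d_X(x_i,x_{i+1})$ for every $i$, so $\ell(f(x_0),\dots,f(x_k))=\ell(x_0,\dots,x_k)=l$; and because $f$ is injective, $x_i\neq x_{i+1}$ forces $f(x_i)\neq f(x_{i+1})$, so the non-degeneracy condition survives. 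These two observations are exactly where the hypothesis ``isometric embedding'' is used essentially — a general $1$-Lipschitz or distance-nonincreasing map would destroy the length grading — so I would state them explicitly.

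Next I would verify that $f_{\#}$ is a chain map, $d^Y_{k,l}\circ f_{\#}=f_{\#}\circ d^X_{k,l}$. Since $d_{k,l}=\sum_i(-1)^i\partial_i$, it suffices to show $\partial_i\circ f_{\#}=f_{\#}\circ\partial_i$ for each $i$, and here the only subtlety is the conditional, length-dependent definition of $\partial_i$: the face $(x_0,\dots,\widehat{x_i},\dots,x_k)$ is kept when its length is still $l$ and sent to $0$ otherwise. But deleting a coordinate commutes with applying $f$, and, again by the isometry property, $\ell(f(x_0),\dots,\widehat{f(x_i)},\dots,f(x_k))=\ell(x_0,\dots,\widehat{x_i},\dots,x_k)$, so the ``$=l$'' test returns the same answer for $\sigma$ and for $f_{\#}(\sigma)$. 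When this common value is $l$, the resulting face is automatically non-degenerate — deleting $x_i$ can only create an adjacency $x_{i-1}=x_{i+1}$, and that would force the length to drop, contradicting $\ell=l$ — so no further check is needed, and the same argument applies verbatim after pushing forward by $f$. Hence the two sides agree on generators and therefore on all of $\mathrm{MC}_{k,l}(X)$.

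With $f_{\#}$ a chain map, it carries $\ker d^X_{k,l}$ into $\ker d^Y_{k,l}$ and $\im d^X_{k+1,l}$ into $\im d^Y_{k+1,l}$, so it descends to a homomorphism of abelian groups
\[
  \mathrm{MH}_{k,l}(f)\colon \mathrm{MH}_{k,l}(X)\longrightarrow \mathrm{MH}_{k,l}(Y),
\]
which we take as the definition of the induced map. Functoriality is then immediate on the chain level and passes to homology: $(\mathrm{id}_X)_{\#}$ is the identity of $\mathrm{MC}_{k,l}(X)$, and for composable isometric embeddings $X\xrightarrow{f}Y\xrightarrow{g}Z$ one has $(g\circ f)_{\#}=g_{\#}\circ f_{\#}$ on generators, hence everywhere, hence on homology.

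I expect no serious obstacle. The one place that genuinely requires attention — and the reason the category is built from isometric embeddings rather than from short maps — is the compatibility of the coordinatewise pushforward with the length grading, both for membership in $\mathrm{MC}_{k,l}$ and for the conditional face maps; everything else is bookkeeping.
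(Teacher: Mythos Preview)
Your proposal is correct and follows essentially the same route as the paper: define the coordinatewise pushforward on chain groups, check well-definedness via the isometry and injectivity hypotheses, verify it is a chain map, and pass to homology with the usual functoriality checks. Your treatment of the conditional face maps $\partial_i$ is in fact more careful than the paper's, which simply writes out the full alternating sum without explicitly tracking the length condition; your observation that the ``$\ell=l$'' test returns the same answer before and after applying $f$ is exactly the point that makes the argument go through.
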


\begin{proof}
To prove that $\mathrm{MH}_{k,l}(-): \mathbf{FinMet}_{iso} \to \mathbf{Ab}$ is a functor, we verify that it satisfies the necessary properties of functors: it assigns a group homomorphism to each isometric embedding, preserves identity morphisms, and preserves composition of morphisms.

First, let $f: (X, d_X) \to (Y, d_Y)$ be an isometric embedding, i.e., $d_Y(f(x), f(x')) = d_X(x, x')$ for all $x, x' \in X$. We define a chain map
\[
f_*: \mathrm{MC}_k(X) \to \mathrm{MC}_k(Y), \quad f_\ast(x_0, \dots, x_k) = (f(x_0), \dots, f(x_k)).
\]
We now check that $f_\ast$ is well-defined. If $(x_0, \dots, x_k) \in \mathrm{MC}_k(X)$, then
\[
d_X(x_0, x_1) + \cdots + d_X(x_{k-1}, x_k) = l, \quad x_i \neq x_{i+1}.
\]
Since $f$ is an isometric embedding, we have
\[
d_Y(f(x_0), f(x_1)) + \cdots + d_Y(f(x_{k-1}), f(x_k)) = d_X(x_0, x_1) + \cdots + d_X(x_{k-1}, x_k) = l,
\]
and since $f$ is injective, we also have $f(x_i) \neq f(x_{i+1})$ whenever $x_i \neq x_{i+1}$. Thus, we obtain
\[
(f(x_0), \dots, f(x_k)) \in \mathrm{MC}_k(Y),
\]
so $f_\ast$ is well-defined.

To show that $f_\ast$ is a chain map, we must verify that it commutes with the boundary operator. By a direct calculation, we have
\[
\partial_k(f_\ast(x_0, \dots, x_k)) = \partial_k(f(x_0), \dots, f(x_k)) = \sum_{i=0}^k (-1)^i (f(x_0), \dots, \widehat{f(x_i)}, \dots, f(x_k)).
\]
On the other hand, we obtain
\begin{align*}
  f_\ast(\partial_k(x_0, \dots, x_k)) = & f_\ast\left( \sum_{i=0}^k (-1)^i (x_0, \dots, \widehat{x_i}, \dots, x_k) \right) \\
    = &\sum_{i=0}^k (-1)^i (f(x_0), \dots, \widehat{f(x_i)}, \dots, f(x_k)).
\end{align*}
Since both expressions are equal, we have $\partial_k \circ f_\ast = f_\ast \circ \partial_k$, so $f_\ast$ is indeed a chain map.

This induces a homomorphism on homology
\[
\mathrm{MH}_{k, l}(f): \mathrm{MH}_{k, l}(X) \to \mathrm{MH}_{k, l}(Y),
\]
defined by $\mathrm{MH}_{k, l}(f)([z]) = [f_\ast(z)]$, where $[z] \in \mathrm{MH}_{k, l}(X)$ is the homology class of a cycle $z \in \ker \partial_k$. This homomorphism is well-defined.

For isometric embeddings $f: (X, d_X) \to (Y, d_Y)$ and $g: (Y, d_Y) \to (Z, d_Z)$, the composition $g \circ f$ induces the chain map $(g \circ f)_\ast = g_\ast \circ f_\ast$, and on homology, we have $\mathrm{MH}_{k, l}(g \circ f) = \mathrm{MH}_{k, l}(g) \circ \mathrm{MH}_{k, l}(f)$, which follows directly from the functoriality of the homology.

Additionally, one can verify that $\mathrm{MH}_{k, l}(-)$ preserves the identity homomorphism.

In summary, the magnitude homology functor $\mathrm{MH}_{k, l}(-): \mathbf{FinMet}_{iso} \to \mathbf{Ab}$ is a functor.
\end{proof}

From the above theorem, it follows that a finite point set in Euclidean space naturally forms a finite metric space.
Therefore, we have the following corollary. Let $\mathbf{FinPts}^{\hookrightarrow}$ be the category of finite point sets in Euclidean space, where the objects are finite sets of points, and the morphisms are geometric embeddings of point sets.

\begin{corollary}
For a fixed non-negative integer $k \geq 0$ and length parameter $l \geq 0$, the magnitude homology functor $\mathrm{MH}_{k,l}(-): \mathbf{FinPts}^{\hookrightarrow} \to \mathbf{Ab}$ is a functor.
\end{corollary}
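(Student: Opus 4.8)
The plan is to realize $\mathbf{FinPts}^{\hookrightarrow}$ as (essentially) a subcategory of $\mathbf{FinMet}_{iso}$ and then obtain the claim by precomposition with Theorem~\ref{theorem:functor}. Concretely, I would first construct a functor $\iota \colon \mathbf{FinPts}^{\hookrightarrow} \to \mathbf{FinMet}_{iso}$. On objects, $\iota$ sends a finite point set $X \subset \mathbb{R}^d$ to the finite metric space $(X, d_{\mathrm{euc}}|_{X \times X})$; the restriction of the Euclidean distance to $X$ is a metric, since positivity, symmetry, and the triangle inequality are all inherited from $\mathbb{R}^d$ and $X$ is finite. On morphisms, a geometric embedding of point sets $f \colon X \hookrightarrow Y$ preserves Euclidean distances and is injective, hence is precisely an isometric embedding $(X, d_{\mathrm{euc}}|_X) \to (Y, d_{\mathrm{euc}}|_Y)$ in $\mathbf{FinMet}_{iso}$.

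Next I would check that $\iota$ is indeed functorial: since $\iota$ is the identity on underlying sets and on underlying maps, it automatically sends identity geometric embeddings to identity isometric embeddings and respects composition. Then the corollary follows immediately, because $\mathrm{MH}_{k,l}(-)\colon \mathbf{FinPts}^{\hookrightarrow} \to \mathbf{Ab}$ is by definition the composite $\mathrm{MH}_{k,l}(-) \circ \iota$, and a composite of functors is a functor, using Theorem~\ref{theorem:functor} for the second factor.

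The main (and essentially only) point requiring care is the bookkeeping around the definition of a ``geometric embedding of point sets'': one must make explicit that such an embedding is a distance-preserving injection between the point sets, so that it coincides with a morphism in $\mathbf{FinMet}_{iso}$ after applying $\iota$. Whether one reads ``geometric embedding'' as the restriction of a global rigid motion of $\mathbb{R}^d$ or as an arbitrary distance-preserving injection, the identification with an isometric embedding is immediate, and no chain-level or homology-level verification beyond what was already carried out in the proof of Theorem~\ref{theorem:functor} is needed.
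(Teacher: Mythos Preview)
Your proposal is correct and matches the paper's own treatment: the paper does not give a separate proof but simply notes that a finite point set in Euclidean space naturally forms a finite metric space, so the corollary follows from Theorem~\ref{theorem:functor}. Your argument via the inclusion functor $\iota\colon \mathbf{FinPts}^{\hookrightarrow}\to\mathbf{FinMet}_{iso}$ is exactly this observation made explicit.
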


Let $X \subseteq Y$ be an inclusion of point sets in a metric space. Then, we can treat both $X$ and $Y$ as finite metric spaces, with the metric inherited from the underlying metric space. In this setting, we have the geometric embedding of finite metric spaces $f_{X,Y}: X \to Y$.

Consider the category $(\mathbb{R}, \leq)$ where the objects are real numbers and the morphisms are given by the partial order, i.e., $a \to b$ if and only if $a \leq b$.
\begin{definition}
 A \textit{persistence finite metric space} is a functor $\mathcal{S}:(\mathbb{R}, \leq) \to \mathbf{FinMet}_{iso}$ from the category $(\mathbb{R}, \leq)$ to the category of finite metric spaces.
\end{definition}

In particular, a functor $\mathcal{S}: (\mathbb{R}, \leq) \to \mathbf{FinPts}^{\hookrightarrow}$ can also be regarded as a persistence finite metric space.

Let $\mathcal{S}:(\mathbb{R}, \leq) \to \mathbf{FinMet}_{iso}$ be a persistence finite metric space.
For any real numbers $a\leq b$, we have a geometric embedding $\mathcal{S}_{a}\to \mathcal{S}_{b}$ of finite metric spaces. This leads to the morphism of magnitude homology
\begin{equation*}
  \mathrm{MH}_{k,l}(-): \mathrm{MH}_{k,l}(\mathcal{S}_{a})\to \mathrm{MH}_{k,l}(\mathcal{S}_{b}).
\end{equation*}

\begin{definition}
The $(a,b)$-\textit{persistent magnitude homology} of $\mathcal{S}$ is defined by
\begin{equation*}
  \mathrm{MH}_{k,l}^{a,b}(\mathcal{S}) = \im \left(\mathrm{MH}_{k,l}(\mathcal{S}_{a})\to \mathrm{MH}_{k,l}(\mathcal{S}_{b})\right),\quad k\geq 0,l\geq 0.
\end{equation*}
\end{definition}
Given that the magnitude homology has two subscripts, the first representing the dimension and the second representing the cycle length, it follows that persistent magnitude homology possesses richer geometric and topological features.

The rank of $\mathrm{MH}_{k,l}^{a,b}(\mathcal{S})$ is called the $(a,b)$-persistent magnitude Betti number, denoted by $\beta_{k,l}^{a,b}(\mathcal{S})=\rank \left(\mathrm{MH}_{k,l}^{a,b}(\mathcal{S})\right)$. Here, $\rank \left(\mathrm{MH}_{k,l}^{a,b}(\mathcal{S})\right)$ is the rank of the free abelian group $\mathrm{MH}_{k,l}^{a,b}(\mathcal{S})$.

Analogous to classical persistent homology, one may study the barcode and the persistence diagram associated with persistent magnitude homology. Moreover, associated results such as a persistence module decomposition theorem can also be developed in the context of magnitude homology.

Let us denote
\begin{equation*}
  \mathbf{MH} = \bigoplus\limits_{a} \mathrm{MH}_{\ast,\ast}(\mathcal{S}_{a}).
\end{equation*}
If the filtration parameter is an integer, that is, for $\mathcal{S}: (\mathbb{Z}, \leq) \to \mathbf{FinMet}_{iso}$, we have a map
\begin{equation*}
  t : \mathrm{MH}_{\ast,\ast}(\mathcal{S}_{a}) \to \mathrm{MH}_{\ast,\ast}(\mathcal{S}_{a+1}), \quad a \in \mathbb{Z}.
\end{equation*}
This induces a linear map
\begin{equation*}
  t : \mathbf{MH} \to \mathbf{MH}
\end{equation*}
of degree 1. Thus, $\mathbf{MH}$ is a $\mathbb{K}[t]$-module, providing a persistence module structure for $\mathbf{MH}$. Consequently, we obtain a decomposition theorem for persistence modules.

\begin{theorem}
Let $\mathbf{MH}$ be a finitely generated $\mathbb{K}[t]$-module. Then we have a decomposition
\begin{equation*}
  \mathbf{MH} \cong \left( \bigoplus\limits_{i} e_{b_i}^{i}(l_i) \cdot \mathbb{K}[t] \right) \oplus \left( \bigoplus\limits_{j} \frac{\tilde{e}_{c_j}^{j}(m_j) \cdot \mathbb{K}[t]}{(t^{d_j})} \right),
\end{equation*}
where $e_{b_i}^{i}(l_i)$ is the free generator of $\bigoplus\limits_{a} \mathrm{MH}_{\ast, l_i}(\mathcal{S}_a)$, with birth time $b_i$ and no death time, and $\tilde{e}_{c_j}^{j}(m_j)$ is the torsion generator of $\bigoplus\limits_{a} \mathrm{MH}_{\ast, m_j}(\mathcal{S}_a)$, with birth time $c_j$ and death time $c_j + d_j$.
\end{theorem}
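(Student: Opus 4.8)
The plan is to recognize $\mathbf{MH}$ as a finitely generated \emph{graded} module over the graded principal ideal domain $\mathbb{K}[t]$, where $\mathbb{K}[t]$ carries the standard grading with $\deg t = 1$ and $\mathbf{MH} = \bigoplus_{a\in\mathbb{Z}} \mathrm{MH}_{\ast,\ast}(\mathcal{S}_a)$ is graded by the persistence parameter $a$, and then to invoke the classification of such modules (as in the persistent homology setting of \cite{zomorodian2004computing}). The first point to record is that the degree-$1$ operator $t$ is homogeneous for the length grading: by the explicit formula $f_\ast(x_0,\dots,x_k)=(f(x_0),\dots,f(x_k))$ used in the proof of Theorem~\ref{theorem:functor}, the map induced by an isometric embedding preserves both the homological degree $k$ and the length $l$, so $t$ restricts to each $(k,l)$-component. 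Consequently $\mathbf{MH}$ decomposes, as a $\mathbb{K}[t]$-module, into the direct sum of the graded submodules $\mathbf{MH}_{\ast,l}:=\bigoplus_{a}\mathrm{MH}_{\ast,l}(\mathcal{S}_a)$, and it suffices to decompose each $\mathbf{MH}_{\ast,l}$ separately. Each such piece is finitely generated and graded: every $\mathcal{S}_a$ is a finite metric space, so $\mathrm{MC}_{k,l}(\mathcal{S}_a)$ and hence $\mathrm{MH}_{k,l}(\mathcal{S}_a)$ is finitely generated over $\mathbb{K}$, and finite generation of $\mathbf{MH}$ over $\mathbb{K}[t]$ forces all but finitely many of the graded pieces and all but finitely many lengths $l$ to vanish.

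Next I would apply the structure theorem for finitely generated graded $\mathbb{K}[t]$-modules to each $\mathbf{MH}_{\ast,l}$: since $\mathbb{K}[t]$ is a graded PID, every such module is isomorphic, as a graded module, to a finite direct sum
\[
\left(\bigoplus_i \Sigma^{b_i}\,\mathbb{K}[t]\right)\oplus\left(\bigoplus_j \Sigma^{c_j}\,\mathbb{K}[t]/(t^{d_j})\right),
\]
where $\Sigma^{s}$ shifts the generator into degree $s$. A free summand $\Sigma^{b_i}\mathbb{K}[t]$ corresponds exactly to a class that is born at the parameter value $b_i$ and is never annihilated by any power of $t$, hence has no death time; a torsion summand $\Sigma^{c_j}\mathbb{K}[t]/(t^{d_j})$ corresponds to a class born at $c_j$ and killed after $d_j$ steps, i.e., with death time $c_j+d_j$. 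Because we applied the theorem inside the fixed-length submodule $\mathbf{MH}_{\ast,l}$, each free generator legitimately carries the label $l$, written $e^{i}_{b_i}(l_i)$ with $l_i$ the length of the summand it generates, and likewise each torsion generator is $\tilde e^{j}_{c_j}(m_j)$; collecting these over all lengths $l$ (and, within each length, over all homological degrees, which is what the subscript $\ast$ absorbs) reassembles the asserted decomposition of $\mathbf{MH}$. As a byproduct the graded structure theorem also gives uniqueness of the multiset of pairs $(b_i,l_i)$ and triples $(c_j,m_j,d_j)$, i.e., a well-defined weighted barcode.

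The content here is essentially bookkeeping once the framework is in place, and the step I expect to be the main obstacle — or at least the one needing genuine care — is precisely the compatibility of the two gradings: the decomposition must be chosen to respect the length grading \emph{simultaneously} with the $\mathbb{K}[t]$-module structure. If one applied the structure theorem to $\mathbf{MH}$ globally, a chosen generator could a priori be an inhomogeneous combination of classes of different lengths, and the notation $e^{i}_{b_i}(l_i)$ would be meaningless. The resolution is exactly the observation above that $t$ is homogeneous for the length grading, so that $\mathbf{MH}=\bigoplus_l \mathbf{MH}_{\ast,l}$ is a decomposition of $\mathbb{K}[t]$-modules and the classification may be invoked one length at a time; this is where Theorem~\ref{theorem:functor}, specifically the fact that isometric embeddings act on magnitude chains length-wise, does the real work.
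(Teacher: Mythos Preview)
Your proposal is correct and in fact more detailed than what the paper itself provides. The paper does not give a proof of this theorem; it simply states the decomposition and remarks afterward that it ``is essentially the same as the usual decomposition of persistence modules, with the only difference being that the generators not only have information about their birth and death times, but also carry weight information.'' In other words, the paper treats the result as an immediate application of the standard structure theorem for finitely generated $\mathbb{K}[t]$-modules (as in \cite{zomorodian2004computing}), with the length $l$ carried along as an extra label.

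Your argument supplies exactly the justification the paper leaves implicit: you observe, via Theorem~\ref{theorem:functor}, that the shift map $t$ is homogeneous for the length grading, so that $\mathbf{MH}=\bigoplus_{l}\mathbf{MH}_{\ast,l}$ is a decomposition of $\mathbb{K}[t]$-modules, and then apply the graded structure theorem lengthwise. This is the right way to ensure that each generator carries a well-defined length label $l_i$ or $m_j$, a point the paper takes for granted. So your approach is the same in spirit but strictly more careful; there is no gap.
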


The decomposition above is essentially the same as the usual decomposition of persistence modules, with the only difference being that the generators not only have information about their birth and death times, but also carry weight information. This weight comes from the second index of the magnitude homology, namely the length. In the subsequent sections, we will introduce weighted persistence modules to capture this phenomenon.

\subsection{Persistent magnitude and its categorification}

Let $\mathcal{S}:(\mathbb{R}, \leq) \to \mathbf{FinMet}_{iso}$ be a persistence finite metric space. For any real numbers $a \leq b$, let us denote
\begin{equation*}
   \magn^{a,b}(\mathcal{S}) = \sum_{l \geq 0} \chi \left(\mathrm{MH}_{\ast,l}^{a,b}(X)\right) \, e^{-l},
\end{equation*}
where
\begin{equation*}
  \chi \left(\mathrm{MH}_{\ast,l}^{a,b}(X)\right) = \sum_{k=0}^\infty (-1)^k \,\rank \big(\mathrm{MH}_{k,l}^{a,b}(X)\big)
\end{equation*}
denotes the Euler characteristic of the magnitude homology at scale $l$, and $\rank(\mathrm{MH}_{k,l}^{a,b}(X))$ is the rank of the free abelian group $\mathrm{MH}_{k,l}^{a,b}(X)$. We call $\magn^{a,b}(\mathcal{S})$ the $(a,b)$-\textit{persistent magnitude} of $\mathcal{S}$.

\begin{theorem}

$\magn^{a,b}(\mathcal{S}) = \magn(\mathcal{S}_a)$.
\end{theorem}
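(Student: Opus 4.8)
The plan is to peel the identity apart into a length‑by‑length comparison of Euler characteristics, and then reduce that to an injectivity property of the comparison maps. First recall that $\magn(\mathcal{S}_a)=\sum_{l\ge 0}\chi(\mathrm{MH}_{\ast,l}(\mathcal{S}_a))\,e^{-l}$ and, by definition, $\magn^{a,b}(\mathcal{S})=\sum_{l\ge 0}\chi(\mathrm{MH}^{a,b}_{\ast,l}(\mathcal{S}))\,e^{-l}$. For a finite metric space $(X,d)$ with smallest nonzero distance $\delta>0$, every generator $(x_0,\dots,x_k)$ of $\mathrm{MC}_{k,l}(X)$ satisfies $l=\sum_i d(x_i,x_{i+1})\ge k\delta$, so $\mathrm{MC}_{k,l}(X)=0$ once $k>l/\delta$; hence for each fixed $l$ the chain complex $\mathrm{MC}_{\ast,l}(X)$ is a bounded complex of free abelian groups, all the ranks appearing below are finite, and the Euler characteristic identity for such complexes gives $\chi(\mathrm{MH}_{\ast,l}(X))=\sum_k(-1)^k\rank\mathrm{MC}_{k,l}(X)$. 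Expanding $\magn(\mathcal{S}_a)$ in this way, it suffices to prove the length‑wise identity $\chi(\mathrm{MH}^{a,b}_{\ast,l}(\mathcal{S}))=\chi(\mathrm{MH}_{\ast,l}(\mathcal{S}_a))$ for every fixed $l\ge 0$ and then sum against $e^{-l}$.

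Next I would reduce the length‑wise identity to a statement about the structure map. Write $\iota_{k,l}\colon\mathrm{MH}_{k,l}(\mathcal{S}_a)\to\mathrm{MH}_{k,l}(\mathcal{S}_b)$ for the homomorphism induced, via Theorem~\ref{theorem:functor}, by the isometric embedding $\mathcal{S}_a\hookrightarrow\mathcal{S}_b$, so that $\mathrm{MH}^{a,b}_{k,l}(\mathcal{S})=\im\iota_{k,l}$ and $\rank\mathrm{MH}^{a,b}_{k,l}(\mathcal{S})=\rank\mathrm{MH}_{k,l}(\mathcal{S}_a)-\rank\ker\iota_{k,l}$. Thus the length‑$l$ identity is equivalent to $\sum_k(-1)^k\rank\ker\iota_{k,l}=0$, and it certainly follows from the cleaner claim that each $\iota_{k,l}$ is injective. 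So the entire theorem comes down to showing that for an isometric embedding $f\colon X\hookrightarrow Y$ of finite metric spaces the induced map $\mathrm{MH}_{k,l}(X)\to\mathrm{MH}_{k,l}(Y)$ is injective for all $k,l$ (or at least that the alternating sum of its kernel ranks vanishes in each length).

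The natural route to injectivity is a chain‑level retraction. The chain map $f_\ast\colon\mathrm{MC}_{\ast,l}(X)\to\mathrm{MC}_{\ast,l}(Y)$ identifies $\mathrm{MC}_{k,l}(X)$ with the subgroup of $\mathrm{MC}_{k,l}(Y)$ spanned by those generators all of whose entries lie in $f(X)$, and this subgroup is a direct summand; let $r_{k,l}\colon\mathrm{MC}_{k,l}(Y)\to\mathrm{MC}_{k,l}(X)$ be the projection fixing such generators and killing the rest, so $r_{\ast,l}\circ f_\ast=\mathrm{id}$. If $r_{\ast,l}$ were a chain map, $f_\ast$ would be a split monomorphism of complexes, hence split injective on homology, and we would be done. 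The step I expect to be the genuine obstacle is precisely that $r_{\ast,l}$ need not commute with the boundary: applying some $\partial_i$ to a generator of $\mathrm{MC}_{k,l}(Y)$ having a single entry outside $f(X)$ can delete that entry and land in $\mathrm{MC}_{k-1,l}(X)$, whereas $r$ of the original generator is $0$. Controlling this failure is the heart of the argument; I would attempt it either by correcting $r$ by a suitable term so that the result is an honest chain retraction, or, failing that, by analysing the long exact sequence of the pair $\bigl(\mathrm{MC}_{\ast,l}(Y),\mathrm{MC}_{\ast,l}(X)\bigr)$ and showing that the connecting homomorphisms out of the homology of the quotient complex contribute zero to $\sum_k(-1)^k\rank\ker\iota_{k,l}$. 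Once the length‑wise identity is in hand, multiplying by $e^{-l}$ and summing over $l$ yields $\magn^{a,b}(\mathcal{S})=\magn(\mathcal{S}_a)$.
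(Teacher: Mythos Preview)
Your reduction to the length-wise identity $\chi(\mathrm{MH}^{a,b}_{\ast,l}(\mathcal{S}))=\chi(\mathrm{MH}_{\ast,l}(\mathcal{S}_a))$, and from there to the vanishing of $\sum_k(-1)^k\rank\ker\iota_{k,l}$, is sound, and you have put your finger on exactly the right obstruction: the naive retraction $r$ fails to be a chain map precisely when a point of $\mathcal{S}_b\setminus\mathcal{S}_a$ sits between two points of $\mathcal{S}_a$. Unfortunately that obstruction is not removable at fixed length. Take $\mathcal{S}_a=\{0,2\}\subset\mathcal{S}_b=\{0,1,2\}$ in $\mathbb{R}$ with the standard metric. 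In $\mathcal{S}_a$ every magnitude differential vanishes, so $\mathrm{MH}_{1,2}(\mathcal{S}_a)\cong\mathbb{Z}^2$, freely generated by $(0,2)$ and $(2,0)$; in $\mathcal{S}_b$ one has $d(0,1,2)=-(0,2)$ and $d(2,1,0)=-(2,0)$, so $\mathrm{MH}_{1,2}(\mathcal{S}_b)=0$. Thus $\iota_{1,2}$ is the zero map on a rank-$2$ group, while $\mathrm{MC}_{k,2}(\mathcal{S}_a)=0$ for every $k\ne 1$. Hence $\chi(\mathrm{MH}^{a,b}_{\ast,2}(\mathcal{S}))=0$ but $\chi(\mathrm{MH}_{\ast,2}(\mathcal{S}_a))=-2$, and the alternating kernel sum at $l=2$ equals $-2\ne 0$. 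Neither correcting $r$ to a genuine chain retraction nor the long-exact-sequence argument can succeed here, because the statement they would establish is already false at this length.

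For comparison, the paper's argument works entirely at the chain level: since the chain map $\mathrm{MC}_{\ast,l}(\mathcal{S}_a)\hookrightarrow\mathrm{MC}_{\ast,l}(\mathcal{S}_b)$ is injective, the image subcomplex $\mathrm{MC}^{a,b}_{\ast,l}(\mathcal{S})$ is isomorphic to $\mathrm{MC}_{\ast,l}(\mathcal{S}_a)$, and the paper then writes $\chi(\mathrm{MH}^{a,b}_{\ast,l}(\mathcal{S}))=\chi(\mathrm{MC}^{a,b}_{\ast,l}(\mathcal{S}))$. That step would be valid if $\mathrm{MH}^{a,b}_{\ast,l}$ were the homology of the subcomplex $\mathrm{MC}^{a,b}_{\ast,l}$, but by definition it is the \emph{image} of the map on homology, which in general (and in the example above) is strictly smaller. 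So the paper's route and yours hinge on the same implicit assumption---injectivity of the induced map on magnitude homology---just expressed at different levels; the gap you isolated is the same one the paper's proof passes over.
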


\begin{proof}
First, for any $l > 0$, the image
\[
\mathrm{MC}_{\ast,l}^{a,b}(\mathcal{S}) = \im \left(\mathrm{MC}_{\ast,l}(\mathcal{S}_{a})  \hookrightarrow \mathrm{MC}_{\ast,l}(\mathcal{S}_{b})\right)
\]
is a chain complex. Consider the short exact sequence
\[
0 \longrightarrow \mathrm{MC}_{\ast,l}(\mathcal{S}_a) \longrightarrow \mathrm{MC}_{\ast,l}^{a,b}(\mathcal{S}) \longrightarrow 0.
\]
It follows that there is an isomorphism of chain complexes
\[
\mathrm{MC}_{\ast,l}(\mathcal{S}_a) \cong \mathrm{MC}_{\ast,l}^{a,b}(\mathcal{S}).
\]

By the definition of magnitude, we have
\begin{align*}
  \magn^{a,b}(\mathcal{S})
    &= \sum_{l \geq 0} \chi \left(\mathrm{MH}_{\ast,l}^{a,b}(\mathcal{S})\right) \, e^{-l} \\
    &= \sum_{l \geq 0} \chi \left(\mathrm{MC}_{\ast,l}^{a,b}(\mathcal{S})\right) \, e^{-l} \\
    &= \sum_{l \geq 0} \chi \left(\mathrm{MC}_{\ast,l}(\mathcal{S}_a)\right) \, e^{-l} \\
    &= \magn(\mathcal{S}_a).
\end{align*}
This completes the proof.
\end{proof}

Although the persistent magnitude homology $\mathrm{MH}_{\ast,l}^{a,b}(\mathcal{S})$ may differ substantially from the magnitude homology $\mathrm{MH}_{\ast,l}(\mathcal{S}_a)$ at $a$, the persistent magnitude $\magn^{a,b}(\mathcal{S})$ remains unchanged. This phenomenon indicates that the Euler characteristic of magnitude homology is invariant throughout the persistence process. Equivalently, the associated persistent magnitude is invariant under persistence.

\section{Representation of persistent magnitude homology}\label{section:representation}

In this section, we introduce a representation of persistent magnitude homology. Unlike classical persistent homology, we use weighted persistence modules and weighted barcodes to provide both an algebraic representation and a geometric visualization of persistent magnitude homology.
From now on, all vector spaces considered are assumed to be finite-dimensional, and accordingly, the corresponding persistence modules are finite-dimensional at each time parameter.

\subsection{Weighted persistence module}

\begin{definition}
A \textit{weighted vector space} is a pair $(V,w)$, where $V$ is a vector space and $w \in \mathbb{R}$ is the weight.
\end{definition}

We assume that the weighted vector spaces considered are of finite dimension. In this case, a weighted vector space always collapses to a finite number of weights.

Let $\mathbf{Vec}^{\ast}_{\mathbb{K}}$ be the category of weighted vector spaces over a field $\mathbb{K}$, with structure defined as follows:
\begin{itemize}
  \item Objects are pairs of the form $(V,w)$, where $V$ is a vector space over $\mathbb{K}$ and $w \in \mathbb{R}$.
  \item A morphism $f:(V,w)\to (V',w')$ is a $\mathbb{K}$-linear map $f:V\to V'$ such that $w=w'$.
  \item The composition of morphisms is the usual composition of $\mathbb{K}$-linear maps.
  \item The identity morphism of an object $(V,w)$ is the identity map $\mathrm{id}_V:V\to V$.
\end{itemize}

In this work, we typically consider weights to be non-negative. A weighted vector space $V$ can be viewed as a graded vector space
\begin{equation*}
  V = \bigoplus_{w \in \mathbb{R}_{\geq 0}} V_w.
\end{equation*}

\begin{example}
Magnitude homology naturally carries a weighted vector space structure. Indeed, for any finite metric space $X$, the magnitude homology $\mathrm{MH}_{\ast,l}(X)$ can be viewed as a weighted vector space, where the weight is given by $l \ge 0$.
\end{example}

\begin{definition}
The \textit{weighted persistence module} is a functor $\mathcal{V}: (\mathbb{R},\leq)\to \mathbf{Vec}^{\ast}_{\mathbb{K}}$.
\end{definition}

The magnitude homology can be viewed as a functor $\mathrm{MH}_{\ast,\ast}(-): \mathbf{FinMet}_{iso} \to \mathbf{Vec}^{\ast}_{\mathbb{K}}$, mapping into the category of weighted vector spaces, where the second subscript corresponds to the weight. Then, for a persistence finite metric space $\mathcal{S}: (\mathbb{R}, \leq) \to \mathbf{FinMet}_{iso}$, we obtain a weighted persistence module $\mathrm{MH}_{\ast,\ast}(\mathcal{S}): (\mathbb{R}, \leq) \to \mathbf{Vec}^{\ast}_{\mathbb{K}}$.

\begin{definition}
The \textit{weighted barcode} of magnitude homology of a persistence finite metric space $\mathcal{S}$ is defined as the collection of triples $(b, d, w)$. Here, $b$ and $d$ correspond to the times at which a topological feature appears and disappears in the filtration of $\mathcal{S}$, respectively, while the weight $w$ represents the relative level of the feature based on its magnitude in the homology computation.
\end{definition}

The weighted barcode is the set of all such triples
\[
\mathcal{B} = \{ ([b, d], w) \mid b \text{ is the birth time}, d \text{ is the death time}, w \text{ is the weight} \}.
\]
The persistence of each topological feature is given by the length of the bar $d - b$, while its level is indicated by the weight $w$.

\begin{definition}
The \textit{weighted persistence diagram} is formally given by the multiset
\[
\mathcal{D} = \left\{ (b, d, w) \mid b \text{ is the birth time}, d \text{ is the death time}, w \text{ is the weight} \right\}.
\]
\end{definition}

\begin{example}
For a persistence finite metric space $\mathcal{S}: (\mathbb{R}, \leq) \to \mathbf{FinMet}_{iso}$, the functor $\mathrm{MH}_{\ast,\ast}(\mathcal{S}): (\mathbb{R}, \leq) \to \mathbf{Vec}^{\ast}_{\mathbb{K}}$ is a weighted persistence module.
\end{example}

\begin{definition}
The \textit{bottleneck distance} between two weighted barcodes $\mathcal{B}_1$ and $\mathcal{B}_2$ is defined as
\[
d_{B}(\mathcal{B}_1, \mathcal{B}_2) = \min_{\gamma} \max_{\substack{(b_1, d_1, w) \in \mathcal{B}_1 \\ (b_2, d_2, w) \in \mathcal{B}_2}} \left( \max\left( |b_1 - b_2|, |d_1 - d_2| \right) \right).
\]
Here, the $\min$ runs over all matchings that preserve the weights, and the $\max$ is taken over the pairs $(b_1, d_1, w)$ and $(b_2, d_2, w)$ corresponding to the matched points.
\end{definition}

\begin{remark}
In computing the bottleneck distance, unmatched bars in weighted barcodes $B_1$ and $B_2$ (for a given weight $w$) are paired with diagonal points $(x, x, w)$. An unmatched bar $(b, d, w)$ is projected to $x = (b + d)/2$, yielding a distance of $(d - b)/2$. This ensures a perfect matching and keeps the distance finite.
\end{remark}

\subsection{Isometry theorem of weighted persistence modules}

We now review the concept of the interleaving distance, which provides an algebraic characterization for the stability theorem of persistence modules \cite{bubenik2014categorification, chazal2009proximity}.

Consider the translation functor $T_x: (\mathbb{R}, \leq) \to (\mathbb{R}, \leq)$ given by $T_x(a) = a + x$ for all $a, x \in \mathbb{R}$. Note that an object in the indexed category $ (\mathbf{Vec}^\ast_{\mathbb{K}})^{\mathbb{R}}$, which is the category of functors from $\mathbb{R}$ to $\mathbf{Vec}^\ast_{\mathbb{K}}$, corresponds to a weighted persistence module.

The translation functor $T_x$ induces an endofunctor
\begin{equation*}
  \Sigma^x: (\mathbf{Vec}^\ast_{\mathbb{K}})^{\mathbb{R}} \to (\mathbf{Vec}^\ast_{\mathbb{K}})^{\mathbb{R}},\quad (\Sigma^x \mathcal{V})(a) = \mathcal{V}(a + x)
\end{equation*}
on the category of weighted persistence modules. The endofunctor $\Sigma^x$, when restricted to a functor $\mathcal{V}$, gives precisely a natural transformation $\Sigma^x|_{\mathcal{V}}: \mathcal{V} \Rightarrow \Sigma^x \mathcal{V}$. For convenience, we always denote $\Sigma^x$ instead of the specific natural transformation $\Sigma^x|_{\mathcal{V}}$.

\begin{definition}
Two weighted persistence modules $\mathcal{V}, \mathcal{W}: (\mathbb{R}, \leq) \to \mathbf{Vec}^\ast_{\mathbb{K}}$ are said to be \textit{$\varepsilon$-interleaved} if there exist natural transformations $\phi: \mathcal{V} \Rightarrow \Sigma^\varepsilon \mathcal{W}$ and $\psi: \mathcal{W} \Rightarrow \Sigma^\varepsilon \mathcal{V}$ such that
\[
(\Sigma^\varepsilon \psi) \circ \phi = \Sigma^{2\varepsilon} , \quad (\Sigma^\varepsilon \phi) \circ \psi = \Sigma^{2\varepsilon} .
\]
\end{definition}
The $\varepsilon$-interleaving between $\mathcal{V}$ and $\mathcal{W}$ can also be described by the following commutative diagrams:
\[
\xymatrix@=0.6cm{
  & \Sigma^{\varepsilon} \mathcal{W} \ar[rd]^{\Sigma^{\varepsilon} \psi} & \\
  \mathcal{V} \ar[ru]^{\phi} \ar[rr]^{\Sigma^{2\varepsilon}} && \Sigma^{2\varepsilon} \mathcal{V},
}
\qquad \qquad
\xymatrix@=0.6cm{
  & \Sigma^{\varepsilon} \mathcal{V} \ar[rd]^{\Sigma^{\varepsilon} \phi} & \\
  \mathcal{W} \ar[ru]^{\psi} \ar[rr]^{\Sigma^{2\varepsilon}} && \Sigma^{2\varepsilon} \mathcal{W}.
}
\]

\begin{definition}
The \textit{interleaving distance} between two weighted persistence modules $\mathcal{V}, \mathcal{W}: (\mathbb{R}, \leq) \to \mathbf{Vec}^\ast_{\mathbb{K}}$ is defined as
\begin{equation*}
  d_{I}(\mathcal{V},\mathcal{W})=\inf\{\varepsilon\geq 0\mid \text{$\mathcal{V}$ and $\mathcal{W}$ are $\varepsilon$-interleaved}\}.
\end{equation*}
\end{definition}

Note that if two weighted persistence modules are $0$-interleaved, then we have $\psi \circ \phi = \mathrm{id}_{\mathcal{V}}$ and $\phi \circ \psi = \mathrm{id}_{\mathcal{W}}$, meaning that $ \mathcal{V}$ and $\mathcal{W}$ are isomorphic. However, the converse does not necessarily hold. If $d_{I}(\mathcal{V}, \mathcal{W}) = 0$, we cannot immediately conclude that they are $0$-interleaved. This means that the interleaving distance is not a strict metric but rather an (extended) pseudometric.

Let $\mathcal{V}: (\mathbb{R}, \leq) \to \mathbf{Vec}^\ast_{\mathbb{K}}$ be a weighted persistence module. Then for any $w\in \mathbb{R}_{\geq 0}$, the construction $\mathcal{V}_{w}: (\mathbb{R}, \leq) \to \mathbf{Vec}_{\mathbb{K}}$ given by $r\mapsto \mathcal{V}_{w}(r)=\mathcal{V}(r)_{w}$ is a persistence module.

\begin{lemma}\label{lemma:interleaving_distance}
Let $\mathcal{V}, \mathcal{W}: (\mathbb{R}, \leq) \to \mathbf{Vec}^\ast_{\mathbb{K}}$ be two weighted persistence modules. Then we have
\begin{equation*}
  d_{I}(\mathcal{V},\mathcal{W}) = \sup\limits_{w\in \mathbb{R}_{\geq 0}} d_{I}(\mathcal{V}_{w},\mathcal{W}_{w}).
\end{equation*}
\end{lemma}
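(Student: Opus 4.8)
The plan is to prove the two inequalities $d_I(\mathcal{V},\mathcal{W}) \le \sup_w d_I(\mathcal{V}_w,\mathcal{W}_w)$ and $d_I(\mathcal{V},\mathcal{W}) \ge \sup_w d_I(\mathcal{V}_w,\mathcal{W}_w)$ separately, exploiting the fact that a morphism in $\mathbf{Vec}^\ast_{\mathbb{K}}$ only exists between vector spaces of the same weight, so every weighted persistence module splits as a direct sum $\mathcal{V} = \bigoplus_{w \in \mathbb{R}_{\ge 0}} \mathcal{V}_w$ of ordinary persistence modules indexed by weight, and crucially any natural transformation $\phi:\mathcal{V}\Rightarrow \Sigma^\varepsilon\mathcal{W}$ must be weight-preserving and hence decomposes as $\phi = \bigoplus_w \phi_w$ with $\phi_w:\mathcal{V}_w \Rightarrow \Sigma^\varepsilon\mathcal{W}_w$. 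This reduction of interleavings to their weight-components is the conceptual heart of the argument; I expect it to be the main (though not difficult) obstacle, since one must carefully justify that the summand-wise data really does assemble into a genuine weighted interleaving and conversely.

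For the inequality $d_I(\mathcal{V},\mathcal{W}) \le \sup_w d_I(\mathcal{V}_w,\mathcal{W}_w)$: set $\varepsilon^\ast = \sup_w d_I(\mathcal{V}_w,\mathcal{W}_w)$ and fix any $\varepsilon > \varepsilon^\ast$. Then for each weight $w$ we have $d_I(\mathcal{V}_w,\mathcal{W}_w) < \varepsilon$, so there are $\varepsilon$-interleaving natural transformations $\phi_w:\mathcal{V}_w \Rightarrow \Sigma^\varepsilon\mathcal{W}_w$ and $\psi_w:\mathcal{W}_w \Rightarrow \Sigma^\varepsilon\mathcal{V}_w$ satisfying the triangle identities in each degree. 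Define $\phi = \bigoplus_w \phi_w$ and $\psi = \bigoplus_w \psi_w$; since the weighted vector space at each index is the finite direct sum over the (finitely many) realized weights, these are well-defined $\mathbb{K}$-linear, weight-preserving, and natural, hence morphisms in $(\mathbf{Vec}^\ast_{\mathbb{K}})^{\mathbb{R}}$. The interleaving identities $(\Sigma^\varepsilon\psi)\circ\phi = \Sigma^{2\varepsilon}$ and $(\Sigma^\varepsilon\phi)\circ\psi = \Sigma^{2\varepsilon}$ hold because they hold summand-wise. Thus $\mathcal{V}$ and $\mathcal{W}$ are $\varepsilon$-interleaved, so $d_I(\mathcal{V},\mathcal{W}) \le \varepsilon$; letting $\varepsilon \downarrow \varepsilon^\ast$ gives the bound.

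For the reverse inequality $d_I(\mathcal{V},\mathcal{W}) \ge \sup_w d_I(\mathcal{V}_w,\mathcal{W}_w)$: fix $\varepsilon > d_I(\mathcal{V},\mathcal{W})$ and choose an $\varepsilon$-interleaving $(\phi,\psi)$ between $\mathcal{V}$ and $\mathcal{W}$. Because the target category $\mathbf{Vec}^\ast_{\mathbb{K}}$ has no nonzero morphisms between objects of different weight, each component $\phi(a):\mathcal{V}(a)\to\mathcal{W}(a+\varepsilon)$ respects the weight grading and restricts to $\phi_w(a):\mathcal{V}_w(a)\to\mathcal{W}_w(a+\varepsilon)$, and likewise for $\psi$; naturality of $\phi$ and $\psi$ restricts to naturality of $\phi_w$ and $\psi_w$, and the interleaving identities restrict to the identities for the pair $(\mathcal{V}_w,\mathcal{W}_w)$. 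Hence $\mathcal{V}_w$ and $\mathcal{W}_w$ are $\varepsilon$-interleaved for every $w$, so $d_I(\mathcal{V}_w,\mathcal{W}_w) \le \varepsilon$ for all $w$, whence $\sup_w d_I(\mathcal{V}_w,\mathcal{W}_w) \le \varepsilon$; letting $\varepsilon \downarrow d_I(\mathcal{V},\mathcal{W})$ finishes the proof. The only point requiring a little care is the edge case where the infimum defining $d_I(\mathcal{V},\mathcal{W})$ is not attained or is infinite, but the $\varepsilon$-strict arguments above sidestep this, and the pointwise finite-dimensionality hypothesis (invoked implicitly via the companion isometry theorem) guarantees only finitely many weights occur, so all direct sums in sight are finite.
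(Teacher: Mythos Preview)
Your proposal is correct and follows essentially the same approach as the paper's proof: both directions are obtained by observing that, since morphisms in $\mathbf{Vec}^\ast_{\mathbb{K}}$ are weight-preserving, an $\varepsilon$-interleaving of weighted persistence modules is precisely the data of an $\varepsilon$-interleaving at each weight, and vice versa. Your treatment is in fact slightly more careful than the paper's, as you work with $\varepsilon$ strictly larger than the relevant infimum and then let $\varepsilon$ decrease, thereby handling the case where the infimum in the definition of $d_I$ is not attained; the paper's argument implicitly assumes the interleavings exist at the infimal value.
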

\begin{proof}
First, assume that $\mathcal{V}$ and $\mathcal{W}$ are $\varepsilon$-interleaved. Then, we have the following commuting diagrams:
\begin{equation*}
\xymatrix@=0.6cm{
  & \Sigma^{\varepsilon} \mathcal{W} \ar[rd]^{\Sigma^{\varepsilon} \psi} & \\
  \mathcal{V} \ar[ru]^{\phi} \ar[rr]^{\Sigma^{2\varepsilon}} && \Sigma^{2\varepsilon} \mathcal{V},
}
\qquad \qquad
\xymatrix@=0.6cm{
  & \Sigma^{\varepsilon} \mathcal{V} \ar[rd]^{\Sigma^{\varepsilon} \phi} & \\
  \mathcal{W} \ar[ru]^{\psi} \ar[rr]^{\Sigma^{2\varepsilon}} && \Sigma^{2\varepsilon} \mathcal{W}.
}
\end{equation*}
These diagrams hold for each weight $w$. In particular, for each $w$, we have the following commuting diagrams:
\begin{equation*}
\xymatrix@=0.6cm{
  & \Sigma^{\varepsilon} \mathcal{W}_w \ar[rd]^{\Sigma^{\varepsilon} \psi_w} & \\
  \mathcal{V}_w \ar[ru]^{\phi_w} \ar[rr]^{\Sigma^{2\varepsilon}} && \Sigma^{2\varepsilon} \mathcal{V}_w,
}
\qquad \qquad
\xymatrix@=0.6cm{
  & \Sigma^{\varepsilon} \mathcal{V}_w \ar[rd]^{\Sigma^{\varepsilon} \phi_w} & \\
  \mathcal{W}_w \ar[ru]^{\psi_w} \ar[rr]^{\Sigma^{2\varepsilon}} && \Sigma^{2\varepsilon} \mathcal{W}_w.
}
\end{equation*}
Here, $\phi_w: \mathcal{V}_w \Rightarrow \Sigma^{\varepsilon} \mathcal{W}_w$ and $\psi_w: \mathcal{W}_w \Rightarrow \Sigma^{\varepsilon} \mathcal{V}_w$ are the natural transformations at each weight $w$.

Thus, for each $w \in \mathbb{R}_{\geq 0}$, $\mathcal{V}_w$ and $\mathcal{W}_w$ are $\varepsilon$-interleaved. Hence, we have the inequality
\[
d_I(\mathcal{V}, \mathcal{W}) \geq d_I(\mathcal{V}_w, \mathcal{W}_w)
\]
for each $w \in \mathbb{R}_{\geq 0}$. Therefore, it follows that
\[
d_I(\mathcal{V}, \mathcal{W}) \geq \sup_{w \in \mathbb{R}_{\geq 0}} d_I(\mathcal{V}_w, \mathcal{W}_w).
\]

On the other hand, assume that $\mathcal{V}_w$ and $\mathcal{W}_w$ are $\varepsilon$-interleaved for each $w \in \mathbb{R}_{\geq 0}$. In this case, for each weight $w$, there exist natural transformations $\phi_w$ and $\psi_w$ such that the interleaving condition is satisfied. By combining these interleavings for all $w$, we can conclude that $\mathcal{V}$ and $\mathcal{W}$ are $\varepsilon$-interleaved. Therefore, we obtain the inequality
\[
d_I(\mathcal{V}, \mathcal{W}) \leq \sup_{w \in \mathbb{R}_{\geq 0}} d_I(\mathcal{V}_w, \mathcal{W}_w).
\]

Combining the two inequalities, we conclude that
\[
d_I(\mathcal{V}, \mathcal{W}) = \sup_{w \in \mathbb{R}_{\geq 0}} d_I(\mathcal{V}_w, \mathcal{W}_w),
\]
as desired.
\end{proof}

\begin{theorem}\label{theorem:isometry}
Let $\mathcal{V}$ and $\mathcal{W}$ be two pointwise finite-dimensional weighted persistence modules. Let $\mathcal{B}_{\mathcal{V}}$ and $\mathcal{B}_{\mathcal{W}}$ be the weighted barcodes of $ \mathcal{V}$ and $\mathcal{W}$, respectively. Then we have
\[
d_B(\mathcal{B}_{\mathcal{V}}, \mathcal{B}_{\mathcal{W}}) = d_I(\mathcal{V}, \mathcal{W}).
\]
\end{theorem}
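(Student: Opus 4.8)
The plan is to reduce the weighted isometry theorem to the classical (unweighted) isometry theorem of Chazal, Cohen-Steiner, Glisse, Guibas, and Oudot, using the weight-decomposition already established in Lemma \ref{lemma:interleaving_distance}. The key structural observation is that a pointwise finite-dimensional weighted persistence module $\mathcal{V}: (\mathbb{R},\leq)\to \mathbf{Vec}^\ast_{\mathbb{K}}$ decomposes, weight by weight, into the ordinary persistence modules $\mathcal{V}_w$, and that the weighted barcode $\mathcal{B}_{\mathcal{V}}$ is precisely the disjoint union $\bigsqcup_{w\in\mathbb{R}_{\geq 0}} \mathcal{B}_{\mathcal{V}_w}$, where each bar inherited from $\mathcal{B}_{\mathcal{V}_w}$ is tagged with weight $w$. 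Morphisms in $\mathbf{Vec}^\ast_{\mathbb{K}}$ are only allowed between objects of equal weight, so there is no interaction across distinct weights at the level of structure maps, natural transformations, interleavings, or barcode matchings.

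First I would establish the barcode decomposition: since $\mathcal{V}$ is pointwise finite-dimensional and only finitely many weights $w$ appear (as noted after the definition of weighted vector space), the structure theorem for pointwise finite-dimensional persistence modules over $(\mathbb{R},\leq)$ applies to each $\mathcal{V}_w$ separately, giving $\mathcal{V}_w\cong\bigoplus_i \mathbb{I}[b_i,d_i)$ and hence $\mathcal{B}_{\mathcal{V}}=\bigsqcup_w \mathcal{B}_{\mathcal{V}_w}$. Second, I would observe that the bottleneck distance between weighted barcodes, as defined in the excerpt, restricts matchings to pairs of equal weight; consequently a matching realizing $d_B(\mathcal{B}_{\mathcal{V}},\mathcal{B}_{\mathcal{W}})$ is exactly a family of matchings $\gamma_w$ between $\mathcal{B}_{\mathcal{V}_w}$ and $\mathcal{B}_{\mathcal{W}_w}$, and the cost is the supremum over $w$ of the costs of the $\gamma_w$. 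This yields
\[
d_B(\mathcal{B}_{\mathcal{V}},\mathcal{B}_{\mathcal{W}}) = \sup_{w\in\mathbb{R}_{\geq 0}} d_B(\mathcal{B}_{\mathcal{V}_w},\mathcal{B}_{\mathcal{W}_w}).
\]
Third, I would invoke the classical isometry theorem pointwise: for each $w$, $d_B(\mathcal{B}_{\mathcal{V}_w},\mathcal{B}_{\mathcal{W}_w}) = d_I(\mathcal{V}_w,\mathcal{W}_w)$. Combining this with Lemma \ref{lemma:interleaving_distance}, which gives $d_I(\mathcal{V},\mathcal{W})=\sup_w d_I(\mathcal{V}_w,\mathcal{W}_w)$, the two suprema match term by term and the theorem follows.

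The main obstacle I anticipate is the clean justification of the bottleneck-distance decomposition formula, specifically that an optimal weighted matching can be chosen to respect the weight grading and that taking suprema over $w$ commutes with the $\min$--$\max$ structure of the bottleneck definition. One must check both inequalities: any global weight-preserving matching restricts to per-weight matchings whose costs are each at most the global cost, giving $\geq$; conversely, assembling optimal (or near-optimal, up to $\eta$) matchings $\gamma_w$ for each of the finitely many weights produces a global matching of cost at most $\sup_w d_B(\mathcal{B}_{\mathcal{V}_w},\mathcal{B}_{\mathcal{W}_w})+\eta$, giving $\leq$. A minor subtlety is handling the diagonal/unmatched bars (per the Remark) consistently within each weight stratum; since the diagonal points also carry the weight label, this causes no difficulty. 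I would also note that finiteness of the weight set lets us avoid any delicate issues with the supremum being attained, though the argument extends to countably many weights with the usual $\varepsilon/2^n$ bookkeeping if needed.
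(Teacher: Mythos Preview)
Your proposal is correct and follows essentially the same approach as the paper: decompose the weighted bottleneck distance as $\sup_w d_B(\mathcal{B}_{\mathcal{V}_w},\mathcal{B}_{\mathcal{W}_w})$, apply the classical isometry theorem at each weight (the paper cites Bauer--Lesnick rather than Chazal et al.), and combine with Lemma~\ref{lemma:interleaving_distance}. Your treatment of the bottleneck decomposition via the two inequalities is in fact more careful than the paper's, which simply asserts the per-weight factorization.
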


\begin{proof}
For the weighted barcodes $\mathcal{B}_{\mathcal{V}}$ and $\mathcal{B}_{\mathcal{W}}$, a weight-preserving matching $\gamma: \mathcal{B}_{\mathcal{V}} \to \mathcal{B}_{\mathcal{W}}$ can be viewed as a collection of matchings at each weight level, which are then combined into a global matching. Therefore, we can write $\gamma$ as $(\gamma_w)_{w \in \mathbb{R}_{\geq 0}}$, where each $\gamma_w$ is a matching at weight $w$. Consequently, the bottleneck distance can be written as
\[
d_B(\mathcal{B}_{\mathcal{V}}, \mathcal{B}_{\mathcal{W}}) = \max_w \max_{\gamma_w} \max_{\substack{(b_1, d_1, w) \in \mathcal{B}_{\mathcal{V}} \\ (b_2, d_2, w) \in \mathcal{B}_{\mathcal{W}}}} \left( \max\left( |b_1 - b_2|, |d_1 - d_2| \right) \right).
\]
Here, the notation $\max_w$ is used because the corresponding weighted persistence modules are finite-dimensional.
Let $\mathcal{B}_{\mathcal{V}}^{w}$ and $\mathcal{B}_{\mathcal{W}}^{w}$ denote the collections of bars of $\mathcal{B}_{\mathcal{V}}$ and $\mathcal{B}_{\mathcal{W}}$ with weight $w$, respectively. Then, we have
\begin{equation*}
  d_B(\mathcal{B}_{\mathcal{V}}^{w}, \mathcal{B}_{\mathcal{W}}^{w}) = \max_{\gamma_w} \max_{\substack{(b_1, d_1, w) \in \mathcal{B}_{\mathcal{V}} \\ (b_2, d_2, w) \in \mathcal{B}_{\mathcal{W}}}} \left( \max\left( |b_1 - b_2|, |d_1 - d_2| \right) \right).
\end{equation*}
It follows that
\begin{equation*}
  d_B(\mathcal{B}_{\mathcal{V}}, \mathcal{B}_{\mathcal{W}}) = \max_w d_B(\mathcal{B}_{\mathcal{V}}^{w}, \mathcal{B}_{\mathcal{W}}^{w}).
\end{equation*}
By \cite[Theorem 3.5]{bauer2015induced}, we know that
\begin{equation*}
  d_B(\mathcal{B}_{\mathcal{V}}, \mathcal{B}_{\mathcal{W}}) = d_I(\mathcal{V}_w, \mathcal{W}_w).
\end{equation*}
Since the weighted persistence modules $\mathcal{V}$ and $\mathcal{W}$ are finite-dimensional, by Lemma \ref{lemma:interleaving_distance}, we have
\[
d_I(\mathcal{V}, \mathcal{W}) = \max_w d_I(\mathcal{V}_w, \mathcal{W}_w) = \max_w d_B(\mathcal{B}_{\mathcal{V}}, \mathcal{B}_{\mathcal{W}}) = d_B(\mathcal{B}_{\mathcal{V}}, \mathcal{B}_{\mathcal{W}}).
\]
The desired result follows.
\end{proof}

\section{On the stability of magnitude invariants}

In this section, we investigate the stability properties of magnitude-related invariants. Our analysis is divided into two primary perspectives. First, by fixing the point set $X$, we examine the stability of persistent magnitude homology with respect to the choice of the filtration center $z$ and the scaling function $f$. We provide a formal treatment of this stability from a categorical perspective.

Second, we address the challenge of metric perturbations. Since the global structure of a finite metric space is inherently rigid, magnitude homology is known to be sensitive to even minor displacements of the points in $X$. To mitigate this, we turn our study to the stability of magnitude for persistence finite metric spaces, investigating the conditions under which magnitude becomes a robust descriptor even when the underlying point set is subjected to noise or deformation.

\subsection{Stability of persistent magnitude homology}

Let $X$ be a finite point set in Euclidean space, which can naturally be regarded as a finite metric space. For a fixed point $z$ in Euclidean space, consider its spherical neighborhood
\begin{equation*}
  N_{r}(z)=\{x\in X \mid \|x-z\|\leq r\},\quad r > 0.
\end{equation*}
The point set $N_{r}(z)$ can also be regarded as a finite metric space.

\begin{proposition}
The construction
\[
N(z): (\mathbb{R}, \leq) \to \mathbf{FinPts}^{\hookrightarrow},\quad r\mapsto N_{r}(z)
\]
defines a functor.
\end{proposition}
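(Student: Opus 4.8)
The plan is to take the morphisms in the image to be the obvious inclusion maps and then verify the (minimal) functor axioms, since the source category $(\mathbb{R},\le)$ is thin. First I would check that $N(z)$ is well defined on objects: for every $r\in\mathbb{R}$ the set $N_r(z)=\{x\in X\mid \|x-z\|\le r\}$ is a subset of the finite point set $X\subseteq\mathbb{R}^d$, hence is itself a finite point set in Euclidean space and so an object of $\mathbf{FinPts}^{\hookrightarrow}$. The degenerate cases are harmless: for $r<0$ we get $N_r(z)=\emptyset$, and the empty (or a singleton) point set is still a legitimate object.

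Next I would define $N(z)$ on morphisms. Recall that $(\mathbb{R},\le)$ has a unique morphism $r\to s$ precisely when $r\le s$. In that case $\|x-z\|\le r\le s$ forces $\|x-z\|\le s$, so $N_r(z)\subseteq N_s(z)$, and I assign to $r\to s$ the set-theoretic inclusion $\iota_{r,s}\colon N_r(z)\hookrightarrow N_s(z)$. This map is injective, and since both $N_r(z)$ and $N_s(z)$ carry the metric restricted from the ambient Euclidean space, $\iota_{r,s}$ preserves all pairwise distances; it is therefore a geometric embedding of point sets, i.e.\ a morphism in $\mathbf{FinPts}^{\hookrightarrow}$.

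Finally I would verify functoriality. The identity morphism of $r$ is sent to $\iota_{r,r}=\mathrm{id}_{N_r(z)}$, so identities are preserved. For $r\le s\le t$, the composite $\iota_{s,t}\circ\iota_{r,s}$ is the inclusion $N_r(z)\hookrightarrow N_t(z)$, which is exactly $\iota_{r,t}=N(z)(r\to t)$; this uses only that a composite of subset inclusions is the inclusion. Hence $N(z)$ preserves composition, and we conclude it is a functor $(\mathbb{R},\le)\to\mathbf{FinPts}^{\hookrightarrow}$.

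There is no real obstacle here; the argument is a bookkeeping check that the thin poset $(\mathbb{R},\le)$ is carried into the poset of subsets of $X$ ordered by inclusion. The only points requiring a word of care are the edge cases of empty or singleton neighborhoods and the (immediate) observation that an inclusion of point sets equipped with the inherited Euclidean metric genuinely qualifies as a morphism of $\mathbf{FinPts}^{\hookrightarrow}$, which holds by the very definition of geometric embedding.
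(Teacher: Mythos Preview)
Your proof is correct and follows essentially the same approach as the paper: the paper's own proof simply notes that for $r_1\le r_2$ the inclusion $N_{r_1}(z)\hookrightarrow N_{r_2}(z)$ is an embedding of point sets and says functoriality is a straightforward verification, which is exactly what you have spelled out in detail.
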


\begin{proof}
For any real numbers $r_1 \leq r_2$, the map
\[
N_{r_1}(z) \hookrightarrow N_{r_2}(z)
\]
is an embedding of point sets. The functoriality follows by a straightforward verification.
\end{proof}

The above construction provides a filtration of the finite point set $X$, which can also be regarded as a filtration of the corresponding finite metric space. This provides the construction of persistent magnitude homology by allowing one to consider neighborhoods at varying length scales.

Based on the distance-based filtration, one can study the persistent magnitude homology of a finite point set $X$. For each fixed point $z$ and length scale $l \geq 0$, consider the chain complexes $\mathrm{MC}_{*,l}(N_{r}(z))$ as $r$ varies. The corresponding magnitude homology groups $\mathrm{MH}_{k,l}(N_{r}(z))$ then form a persistence module $\mathrm{MH}_{k,l}^{a,b}(N(z)) : \mathrm{MH}_{k,l}(N_{a}(z)) \longrightarrow \mathrm{MH}_{k,l}(N_{b}(z))$ for $a \leq b$. The rank of $\mathrm{MH}_{k,l}^{a,b}(N(z))$ is called the $(a,b)$-persistent magnitude Betti number. Analogous to classical persistent homology, one can construct the barcodes and persistence diagrams associated with these persistence modules, providing a multi-scale summary of the topological and geometric features captured by magnitude homology.

Now, let $f:\mathbb{R}\to \mathbb{R}^{+}$ be a non-decreasing function. The generalized inverse of $f$, denoted by $f^{-1}$, is defined as
\[
f^{-1}(y) = \inf \{ x \in \mathbb{R} \mid f(x) \geq y \}.
\]
This gives the generalized inverse as the smallest $x$ such that $f(x) \geq y$, which is well-defined due to the monotonicity of $f$.

For a given point set $X$ and a fixed point $z$ in the space, we obtain a persistence finite metric space
\[
N(z,f) = N_{f^{-1}(-)}(z): (\mathbb{R}, \leq) \longrightarrow \mathbf{FinMet}_{iso}.
\]
We denote $N_{a}(z,f) = N_{f^{-1}(a)}(z)$.

Correspondingly, we have the persistent magnitude homology $\mathrm{MH}_{k,l}^{a,b}(N(z,f))$ for any real numbers $a \leq b$. We denote the associated weighted barcode by $\mathcal{B}(N(z,f))$.

For two non-decreasing functions $f,g:\mathbb{R}\to \mathbb{R}^{+}$, we define their distance
\begin{equation*}
  \|f-g\| = \sup\limits_{x\in \mathbb{R}} |f(x)-g(x)|.
\end{equation*}
Then we have the stability theorem for persistent magnitude homology.

\begin{theorem}\label{theorem:stability_radius}
Let $f,g: \mathbb{R} \to \mathbb{R}^{+}$ be two non-decreasing functions. Then we have
\[
  d_B(\mathcal{B}(N(z,f)), \mathcal{B}(N(z,g))) \leq \| f - g \|.
\]
\end{theorem}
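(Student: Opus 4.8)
The plan is to reduce the bottleneck estimate to an interleaving estimate via the isometry theorem (Theorem~\ref{theorem:isometry}), and then to construct an explicit interleaving from the obvious inclusions of spherical neighbourhoods. First I would fix a homological degree $k$: the weighted barcode $\mathcal{B}(N(z,f))$ splits as a disjoint union indexed by $k$ and by the weight $l$, and $d_B$ is the supremum of the bottleneck distances of the pieces, so it is enough to show $d_I(\mathcal{V},\mathcal{W})\le\|f-g\|$ for the weighted persistence modules $\mathcal{V}=\mathrm{MH}_{k,\ast}(N(z,f))$ and $\mathcal{W}=\mathrm{MH}_{k,\ast}(N(z,g))$; the claimed inequality will then follow from Theorem~\ref{theorem:isometry} together with Lemma~\ref{lemma:interleaving_distance}.

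Write $\varepsilon:=\|f-g\|$. The crux is an elementary comparison of generalized inverses. Since $|f(x)-g(x)|\le\varepsilon$ for every $x$, we have $f(x)\ge g(x)-\varepsilon$, hence $\{x\mid g(x)\ge a+\varepsilon\}\subseteq\{x\mid f(x)\ge a\}$; taking infima (an infimum over a smaller set is no smaller) gives $f^{-1}(a)\le g^{-1}(a+\varepsilon)$ for all $a\in\mathbb{R}$, and interchanging $f$ and $g$ gives $g^{-1}(a)\le f^{-1}(a+\varepsilon)$. Because $N_{r}(z)\subseteq N_{r'}(z)$ whenever $r\le r'$, these inequalities yield honest inclusions of finite metric spaces
\[
  N_{a}(z,f)=N_{f^{-1}(a)}(z)\ \hookrightarrow\ N_{g^{-1}(a+\varepsilon)}(z)=N_{a+\varepsilon}(z,g),
\]
\[
  N_{a}(z,g)=N_{g^{-1}(a)}(z)\ \hookrightarrow\ N_{f^{-1}(a+\varepsilon)}(z)=N_{a+\varepsilon}(z,f),
\]
for all $a$, with the convention $N_{r}(z)=\emptyset$ for $r\le 0$ so that all objects are well defined.

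Next I would apply the functor $\mathrm{MH}_{k,\ast}(-)$ of Theorem~\ref{theorem:functor}, which sends an isometric embedding to a homomorphism respecting the weight ($=$ length) grading: the first family of inclusions induces $\phi_a\colon\mathcal{V}(a)\to(\Sigma^{\varepsilon}\mathcal{W})(a)$ and the second induces $\psi_a\colon\mathcal{W}(a)\to(\Sigma^{\varepsilon}\mathcal{V})(a)$. Naturality of $\phi=(\phi_a)_a$ and $\psi=(\psi_a)_a$ is immediate, since every square of neighbourhood inclusions commutes on the nose (both composites are the single inclusion of the smallest into the largest neighbourhood appearing) and a functor preserves commuting squares. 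For the two interleaving identities I would observe that
\[
  N_{f^{-1}(a)}(z)\hookrightarrow N_{g^{-1}(a+\varepsilon)}(z)\hookrightarrow N_{f^{-1}(a+2\varepsilon)}(z)
\]
composes to the inclusion $N_{f^{-1}(a)}(z)\hookrightarrow N_{f^{-1}(a+2\varepsilon)}(z)$, which is exactly the structure map of $\mathcal{V}$ realizing $\Sigma^{2\varepsilon}$; applying $\mathrm{MH}_{k,\ast}(-)$ gives $(\Sigma^{\varepsilon}\psi)\circ\phi=\Sigma^{2\varepsilon}$, and the symmetric computation gives $(\Sigma^{\varepsilon}\phi)\circ\psi=\Sigma^{2\varepsilon}$. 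Thus $\mathcal{V}$ and $\mathcal{W}$ are $\varepsilon$-interleaved, so $d_I(\mathcal{V},\mathcal{W})\le\varepsilon=\|f-g\|$, and Theorem~\ref{theorem:isometry} delivers the conclusion.

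I do not expect a serious obstacle: this is the standard sublevel-set reparametrization interleaving, and the only points needing genuine care are getting the direction of the generalized-inverse inequality right and handling the edge cases where $f^{-1}(a)$ is nonpositive or $+\infty$ (so that $N_{f^{-1}(a)}(z)$ is empty or all of $X$). The bookkeeping that allows Theorem~\ref{theorem:isometry} to be invoked one homological degree and one weight at a time is routine but should be spelled out.
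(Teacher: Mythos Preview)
Your proposal is correct and follows essentially the same route as the paper: compare generalized inverses to get inclusions $N_a(z,f)\hookrightarrow N_{a+\varepsilon}(z,g)$ and vice versa, deduce an $\varepsilon$-interleaving, and invoke the isometry theorem. The only cosmetic differences are that the paper establishes the interleaving at the level of the persistence finite metric spaces $N(z,f)$ and $N(z,g)$ themselves and then pushes it through $\mathrm{MH}_{\ast,\ast}$ via the general fact that functors do not increase interleaving distance, whereas you apply $\mathrm{MH}_{k,\ast}$ first and verify the interleaving identities by hand; and the paper derives $f^{-1}(a)\le g^{-1}(a+\varepsilon)$ from $a+\varepsilon\ge g(f^{-1}(a))$ rather than by your sublevel-set comparison.
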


\begin{proof}
Let $\varepsilon = \| f - g \|$. For any real number $a \in \mathbb{R}$, we have
\begin{equation*}
  a+\varepsilon\geq a + [g(f^{-1}(a))-f(f^{-1}(a))]= g(f^{-1}(a)).
\end{equation*}
It follows that
\begin{equation*}
  f^{-1}(a) \leq g^{-1}(a + \varepsilon).
\end{equation*}
This implies the inclusion
\[
  N_{a}(z,f)=N_{f^{-1}(a)}(z) \hookrightarrow \Sigma^{\varepsilon} N(z,g) = N_{a + \varepsilon}(z,g).
\]
Similarly, we have the reverse inclusion
\[
  N_{a}(z,g) \hookrightarrow \Sigma^{\varepsilon} N_{a}(z,f).
\]
These inclusions give rise to the following natural transformations
\[
  \phi: N(z,f) \Rightarrow \Sigma^{\varepsilon} N(z,g), \quad \psi: N(z,g) \Rightarrow \Sigma^{\varepsilon} N(z,f).
\]
It is directly verified that
\[
  (\Sigma^{\varepsilon}\psi)\phi = \Sigma^{2\varepsilon} : N(z,f) \to \Sigma^{2\varepsilon} N(z,f),
\]
which effectively shifts $N_{a}(z,f)$ to $N_{a+2\varepsilon}(z,f)$ for any $a \in \mathbb{R}$. Similarly, we also have
\[
  (\Sigma^{\varepsilon}\phi)\psi = \Sigma^{2\varepsilon} : N(z,g) \to \Sigma^{2\varepsilon} N(z,g).
\]
Hence, the persistence finite metric spaces $N(z,f)$ and $N(z,g)$ are $\varepsilon$-interleaved. It follows that
\[
  d_I(N(z,f), N(z,g)) \leq \varepsilon.
\]
Now, consider the composition of functors
\[
\xymatrix{
  (\mathbb{R}, \leq)  \ar@<0.5ex>[rr]^-{N(z,f)} \ar@<-0.5ex>[rr]_-{N(z,g)} && \mathbf{FinMet}_{iso} \ar@{->}[rr]^-{\mathrm{MH}_{\ast,\ast}} && \mathbf{Vec}^{\ast}_{\mathbb{K}}.
}
\]
By \cite[Proposition 3.6]{bubenik2014categorification}, we have the inequality
\[
  d_I(\mathrm{MH}_{\ast,\ast}(N(z,f)), \mathrm{MH}_{\ast,\ast}(N(z,g))) \leq d_I(N(z,f), N(z,g)) \leq \varepsilon.
\]
By Theorem \ref{theorem:isometry}, we obtain
\[
  d_I(\mathrm{MH}_{\ast,\ast}(N(z,f)), \mathrm{MH}_{\ast,\ast}(N(z,g))) = d_B(\mathcal{B}(N(z,f)), \mathcal{B}(N(z,g))).
\]
Thus, we conclude that
\[
  d_B(\mathcal{B}(N(z,f)), \mathcal{B}(N(z,g))) \leq \varepsilon.
\]
This completes the proof.
\end{proof}

\begin{theorem}\label{theorem:stability_center}
Let $f: \mathbb{R} \to \mathbb{R}^{+}$ be a non-decreasing convex function.
We have
\[
  d_B(\mathcal{B}(N(z,f)), \mathcal{B}(N(z',f))) \leq f(\| z - z' \|).
\]
\end{theorem}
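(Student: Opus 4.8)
The plan is to follow the template of the proof of Theorem~\ref{theorem:stability_radius}. We first exhibit an $\varepsilon$-interleaving between the persistence finite metric spaces $N(z,f)$ and $N(z',f)$ with $\varepsilon=f(\|z-z'\|)$; then we transport it along the functor $\mathrm{MH}_{\ast,\ast}\colon\mathbf{FinMet}_{iso}\to\mathbf{Vec}^{\ast}_{\mathbb{K}}$ by means of \cite[Proposition 3.6]{bubenik2014categorification}, obtaining
\[
  d_I\bigl(\mathrm{MH}_{\ast,\ast}(N(z,f)),\mathrm{MH}_{\ast,\ast}(N(z',f))\bigr)\le d_I\bigl(N(z,f),N(z',f)\bigr)\le\varepsilon;
\]
finally we apply the isometry theorem (Theorem~\ref{theorem:isometry}) to rewrite the left-hand side as $d_B(\mathcal{B}(N(z,f)),\mathcal{B}(N(z',f)))$, which gives the stated bound.

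Write $\delta=\|z-z'\|$ and $\varepsilon=f(\delta)$. The core step is to produce, for every $a\in\mathbb{R}$, an inclusion
\[
  N_a(z,f)=N_{f^{-1}(a)}(z)\hookrightarrow N_{f^{-1}(a+\varepsilon)}(z')=N_{a+\varepsilon}(z',f),
\]
together with its mirror image with $z$ and $z'$ interchanged (which is legitimate since $\|z-z'\|=\|z'-z\|$, so the same $\varepsilon$ works). For the displayed inclusion, take $x\in X$ with $\|x-z\|\le f^{-1}(a)$; the triangle inequality gives $\|x-z'\|\le\|x-z\|+\delta\le f^{-1}(a)+\delta$, so it suffices to establish the reparametrization estimate
\[
  f^{-1}(a)+\delta\le f^{-1}\bigl(a+f(\delta)\bigr).
\]
Setting $s=f^{-1}(a)$, this says that the increment $f(s+\delta)-f(s)$ is at most $f(\delta)$, and it is here that the convexity of $f$ is to be used---together with the finiteness of $X$, which confines the relevant radii $s=f^{-1}(a)$ to the bounded interval $[0,\max_{x\in X}\|x-z\|]$, since $N_{\bullet}(z,f)$ is already all of $X$ once the radius exceeds that maximum. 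Granting the two families of inclusions, they assemble into natural transformations $\phi\colon N(z,f)\Rightarrow\Sigma^{\varepsilon}N(z',f)$ and $\psi\colon N(z',f)\Rightarrow\Sigma^{\varepsilon}N(z,f)$, and the interleaving identities $(\Sigma^{\varepsilon}\psi)\phi=\Sigma^{2\varepsilon}$ and $(\Sigma^{\varepsilon}\phi)\psi=\Sigma^{2\varepsilon}$ are verified directly, exactly as in the proof of Theorem~\ref{theorem:stability_radius}, because every map involved is an inclusion of subsets of $X$. Hence $d_I(N(z,f),N(z',f))\le\varepsilon$, and the first paragraph completes the argument.

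The main obstacle will be the reparametrization estimate. One must pin down precisely which values of $a$---equivalently, which radii $s=f^{-1}(a)$---actually need to be treated for the interleaving, using the finiteness of $X$ to truncate the filtration; then make precise how the convexity hypothesis on $f$ controls the increment $f(s+\delta)-f(s)$ by $f(\|z-z'\|)$ on that range (this is the step that genuinely needs care, including reconciling the direction of convexity with the direction of the required inclusions); and finally handle the routine bookkeeping around the generalized inverse at jumps of $f$. Everything downstream---assembling the two natural transformations, checking the interleaving identities, pushing the interleaving forward through $\mathrm{MH}_{\ast,\ast}$, and invoking Theorem~\ref{theorem:isometry}---is a direct repetition of the corresponding steps in the proof of Theorem~\ref{theorem:stability_radius}.
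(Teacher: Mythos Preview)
Your plan mirrors the paper's proof exactly: the paper also reduces to the inclusion $N_a(z,f)\subseteq N_{a+f(\|z-z'\|)}(z',f)$, obtains it from the inequality $f^{-1}(a)+\|z-z'\|\le f^{-1}\bigl(a+f(\|z-z'\|)\bigr)$, assembles the $\varepsilon$-interleaving with $\varepsilon=f(\|z-z'\|)$, and then finishes verbatim as in Theorem~\ref{theorem:stability_radius}.

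Your hesitation over the reparametrization estimate is warranted, and it in fact points to a gap in the paper's own argument. Writing $s=f^{-1}(a)$ and $\delta=\|z-z'\|$, the required inequality is $f(s+\delta)\le f(s)+f(\delta)$, i.e.\ subadditivity of $f$ on $[0,\infty)$. The paper asserts this ``by the convexity of the function $f$'', but convexity yields the opposite direction: the increments of a convex function are non-decreasing, so $f(s+\delta)-f(s)\ge f(\delta)-f(0)$ for all $s\ge 0$, and for any strictly convex $f$ with $f(0)>0$ (e.g.\ $f(t)=e^{t}$, where $e^{2}>2e$) the needed inequality fails for every $s>0$. Truncating to a bounded range of radii via the finiteness of $X$ does not rescue this. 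The estimate \emph{does} hold if $f$ is concave on $[0,\infty)$ with $f(0)\ge 0$---which covers $f=\mathrm{id}$ and hence the subsequent corollary---so the statement is salvageable with ``concave'' (or ``subadditive'') in place of ``convex''; under the stated convexity hypothesis, however, neither your outline nor the paper's proof goes through as written.
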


\begin{proof}
For any real number $a \in \mathbb{R}$, by the convexity of the function $f$, we have
\begin{equation*}
  f(f^{-1}(a) + \|z - z'\|) \leq a + f(\|z - z'\|).
\end{equation*}
Next, using the monotonicity of $f$, we obtain
\begin{equation*}
  f^{-1}(a) + \|z - z'\| \leq f^{-1}(a + f(\|z - z'\|)).
\end{equation*}
This implies that
\begin{equation*}
  N_a(z, f) \subseteq N_{a + f(\|z - z'\|)}(z', f).
\end{equation*}
Similarly, we also have
\begin{equation*}
  N_a(z', f) \subseteq N_{a + f(\|z - z'\|)}(z, f).
\end{equation*}
Let $\varepsilon = f(\|z - z'\|)$. Then, we obtain the following natural transformations
\[
  \phi: N(z, f) \Rightarrow \Sigma^{\varepsilon} N(z', f), \quad \psi: N(z', f) \Rightarrow \Sigma^{\varepsilon} N(z, f),
\]
such that
\begin{equation*}
  (\Sigma^{\varepsilon}\psi) \phi = \Sigma^{2\varepsilon}: N(z, f) \to \Sigma^{2\varepsilon} N(z, f),
\end{equation*}
and
\begin{equation*}
  (\Sigma^{\varepsilon}\phi) \psi = \Sigma^{2\varepsilon}: N(z', f) \to \Sigma^{2\varepsilon} N(z', f).
\end{equation*}
Thus, the persistence finite metric spaces $N(z, f)$ and $N(z', f)$ are $\varepsilon$-interleaved. Therefore, we have
\[
  d_I(N(z, f), N(z', f)) \leq \varepsilon.
\]
The remaining part of the proof follows step-by-step similarly to that of Theorem \ref{theorem:stability_radius}.
\end{proof}

A direct corollary of Theorem \ref{theorem:stability_center}, when $f = \mathrm{id}$, is as follows.

\begin{corollary}
For two fixed points $z$ and $z'$, we have
\[
  d_B(\mathcal{B}(N(z)), \mathcal{B}(N(z'))) \leq \| z - z' \|.
\]
\end{corollary}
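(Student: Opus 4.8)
The plan is to obtain the corollary as the special case $f=\mathrm{id}$ of Theorem~\ref{theorem:stability_center}. First I would check that the identity map satisfies the hypotheses of that theorem: it is non-decreasing and, being affine, convex. The only point needing care is the codomain, since Theorem~\ref{theorem:stability_center} is stated for functions $\mathbb{R}\to\mathbb{R}^{+}$ whereas $\mathrm{id}$ takes negative values; but $N_{r}(z)$ is only ever formed for $r>0$, so this is harmless. Concretely, one may work with the truncation $r\mapsto\max(r,0)$, which agrees with $\mathrm{id}$ on $\mathbb{R}_{\ge 0}$, remains non-decreasing and convex, and yields the same filtration — hence the same weighted barcode. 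Below I write $\mathrm{id}$ with this understanding.

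Next I would identify the persistence finite metric space $N(z,\mathrm{id})$ with $N(z)$. The generalized inverse of the identity is the identity: $\mathrm{id}^{-1}(y)=\inf\{x\in\mathbb{R}\mid x\ge y\}=y$. Therefore $N_{a}(z,\mathrm{id})=N_{\mathrm{id}^{-1}(a)}(z)=N_{a}(z)$ for every $a\in\mathbb{R}$, so $N(z,\mathrm{id})$ and $N(z)$ coincide as functors $(\mathbb{R},\le)\to\mathbf{FinMet}_{iso}$, and consequently $\mathcal{B}(N(z,\mathrm{id}))=\mathcal{B}(N(z))$; likewise $\mathcal{B}(N(z',\mathrm{id}))=\mathcal{B}(N(z'))$.

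Finally I would invoke Theorem~\ref{theorem:stability_center} with $f=\mathrm{id}$, obtaining
\[
d_{B}\bigl(\mathcal{B}(N(z,\mathrm{id})),\,\mathcal{B}(N(z',\mathrm{id}))\bigr)\;\le\;\mathrm{id}(\|z-z'\|)\;=\;\|z-z'\|,
\]
and then substitute the identifications above to conclude $d_{B}(\mathcal{B}(N(z)),\mathcal{B}(N(z')))\le\|z-z'\|$. As an aside, a direct proof is also available and is in fact the $f=\mathrm{id}$ instance of the argument for Theorem~\ref{theorem:stability_center}: the triangle inequality $\|x-z'\|\le\|x-z\|+\|z-z'\|$ gives the inclusions $N_{a}(z)\subseteq N_{a+\|z-z'\|}(z')$ and, symmetrically, $N_{a}(z')\subseteq N_{a+\|z-z'\|}(z)$, which assemble into natural transformations exhibiting $N(z)$ and $N(z')$ as $\|z-z'\|$-interleaved; applying \cite[Proposition 3.6]{bubenik2014categorification} and Theorem~\ref{theorem:isometry} then finishes exactly as in the proof of Theorem~\ref{theorem:stability_radius}.

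I do not expect any genuine obstacle: the statement is a pure specialization, and the only things requiring attention are the elementary fact that $\mathrm{id}^{-1}=\mathrm{id}$ and the domain/codomain bookkeeping that lets one use a function taking negative values in a theorem phrased for $\mathbb{R}^{+}$-valued functions.
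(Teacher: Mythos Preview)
Your proposal is correct and matches the paper's approach exactly: the paper states this as an immediate consequence of Theorem~\ref{theorem:stability_center} with $f=\mathrm{id}$, and your identifications $N(z,\mathrm{id})=N(z)$ and $\mathrm{id}(\|z-z'\|)=\|z-z'\|$ are precisely what is needed. The additional care you take with the codomain and the aside giving a direct interleaving argument are both sound, though the paper does not spell these out.
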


\begin{theorem}\label{theorem:stability_composition}
Let $f,g: \mathbb{R} \to \mathbb{R}^{+}$ be two non-decreasing functions. Assume $f$ is convex. Then we have
\[
  d_B(\mathcal{B}(N(z,f)), \mathcal{B}(N(z',g))) \leq \| f - g \|+f(\| z - z' \|).
\]
\end{theorem}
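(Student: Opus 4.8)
The plan is to bound the left-hand side by inserting the intermediate persistence finite metric space $N(z',f)$ and applying the triangle inequality for the bottleneck distance together with the two stability results already proved. Explicitly, I would write
\[
d_B(\mathcal{B}(N(z,f)), \mathcal{B}(N(z',g))) \le d_B(\mathcal{B}(N(z,f)), \mathcal{B}(N(z',f))) + d_B(\mathcal{B}(N(z',f)), \mathcal{B}(N(z',g))),
\]
then bound the first term by $f(\|z-z'\|)$ via Theorem \ref{theorem:stability_center} (applicable precisely because $f$ is convex and non-decreasing), and the second term by $\|f-g\|$ via Theorem \ref{theorem:stability_radius} (which needs only that $f$ and $g$ are non-decreasing). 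Summing the two bounds yields exactly $\|f-g\| + f(\|z-z'\|)$. A crucial bookkeeping point is that the intermediate object must be $N(z',f)$ and not $N(z,g)$: this routes the center shift through the step that genuinely uses convexity, while the reparametrization step only invokes the (monotonicity) hypothesis it requires.

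The only subtlety is justifying the triangle inequality for $d_B$, and I would actually sidestep it by working directly at the level of persistence finite metric spaces. From the proof of Theorem \ref{theorem:stability_center} take natural transformations $\phi_1,\psi_1$ exhibiting an $f(\|z-z'\|)$-interleaving between $N(z,f)$ and $N(z',f)$, and from the proof of Theorem \ref{theorem:stability_radius} take $\phi_2,\psi_2$ exhibiting an $\|f-g\|$-interleaving between $N(z',f)$ and $N(z',g)$. Setting $\varepsilon_1=f(\|z-z'\|)$, $\varepsilon_2=\|f-g\|$, the composites $\Sigma^{\varepsilon_1}\phi_2 \circ \phi_1: N(z,f) \Rightarrow \Sigma^{\varepsilon_1+\varepsilon_2}N(z',g)$ and $\Sigma^{\varepsilon_2}\psi_1 \circ \psi_2: N(z',g) \Rightarrow \Sigma^{\varepsilon_1+\varepsilon_2}N(z,f)$ exhibit an $(\varepsilon_1+\varepsilon_2)$-interleaving, with the required identities $(\Sigma^{\varepsilon_1+\varepsilon_2}(\Sigma^{\varepsilon_2}\psi_1\circ\psi_2))\circ(\Sigma^{\varepsilon_1}\phi_2\circ\phi_1)=\Sigma^{2(\varepsilon_1+\varepsilon_2)}$ (and symmetrically) following formally from the corresponding identities for the two pieces. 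Hence $d_I(N(z,f),N(z',g)) \le \|f-g\|+f(\|z-z'\|)$.

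From here the argument closes exactly as in Theorem \ref{theorem:stability_radius}: apply the functor $\mathrm{MH}_{\ast,\ast}(-)$ and \cite[Proposition 3.6]{bubenik2014categorification} to push the interleaving down to the weighted persistence modules $\mathrm{MH}_{\ast,\ast}(N(z,f))$ and $\mathrm{MH}_{\ast,\ast}(N(z',g))$, and then invoke the isometry theorem (Theorem \ref{theorem:isometry}) to replace $d_I$ by $d_B$ of the associated weighted barcodes. I do not expect any genuine obstacle; the main thing to get right is the order of the shifts and the choice of intermediate reparametrization, so that convexity is used exactly where Theorem \ref{theorem:stability_center} demands it.
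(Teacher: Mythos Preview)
Your proposal is correct and matches the paper's approach exactly: the paper's proof is the one-line observation that the result follows from the triangle inequality for the bottleneck distance together with Theorems~\ref{theorem:stability_radius} and~\ref{theorem:stability_center}, which is precisely your first paragraph with the intermediate object $N(z',f)$. Your additional remarks about composing the interleavings directly and pushing through $\mathrm{MH}_{\ast,\ast}$ are a valid elaboration but not needed, since the triangle inequality for $d_B$ is standard.
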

\begin{proof}
The result follows from the triangle inequality of the bottleneck distance, along with Theorems \ref{theorem:stability_radius} and \ref{theorem:stability_center}.
\end{proof}

\subsection{Stability of magnitude for persistence finite metric space}

Let $X$ be a finite set in a Euclidean space $E$, and let $u_X$ denote the barycenter of $X$. We have the persistence finite metric space associated with $X$ as a filtration
\begin{equation*}
    \mathcal{N}(X): (\mathbb{R}, \leq) \to \mathbf{FinPts}^{\hookrightarrow}
\end{equation*}
given by
\begin{equation*}
    \mathcal{N}_r(X) = \{x \in X \mid \|x - u_X\| \leq r\}.
\end{equation*}

Let $T: E \to E$ be an isometry. Since isometries preserve the relative distances within the point set, it follows that the resulting barcodes are invariant, i.e.,
\begin{equation*}
    \mathcal{B}(\mathcal{N}(TX)) = \mathcal{B}(\mathcal{N}(X)).
\end{equation*}

More generally, consider a transformation $T: E \to E$ that perturbs $X$, yielding the mapped set $TX$. Since translations do not affect the weighted barcodes, we may assume without loss of generality that the perturbation $T$ preserves the barycenter, such that $u_{TX} = u_X$. Consequently, we can consistently assume that the barycenter of $X$ is located at the origin $O$. The set of all $n$-point configurations with the barycenter at the origin defines the \textit{centered configuration space}
\[
    \mathcal{C}_n^0(\mathbb{R}^d) = \left\{ (x_1, \dots, x_n) \in (\mathbb{R}^d)^n \mid \sum_{i=1}^n x_i = 0, x_i \neq x_j \text{ for } i \neq j \right\}.
\]
This space is an open submanifold of the subspace $\{(x_1, \dots, x_n) \in (\mathbb{R}^d)^n \mid \sum_{i=1}^n x_i = 0\} \cong \mathbb{R}^{d(n-1)}$, and thus possesses dimension $d(n-1)$. We equip $(\mathbb{R}^d)^n$ with the standard Euclidean metric
\[
    d((x_1, \dots, x_n), (y_1, \dots, y_n)) = \left( \sum_{i=1}^n \|x_i - y_i\|^2 \right)^{1/2},
\]
where $\|\cdot\|$ denotes the Euclidean norm. By restricting this metric, $\mathcal{C}_n^0(\mathbb{R}^d)$ becomes a well-defined metric space.

For the unordered case, we can consider the quotient space
\begin{equation*}
    \widetilde{\mathcal{C}}_n^0(\mathbb{R}^d) \cong \mathcal{C}_n^0(\mathbb{R}^d) / S_n,
\end{equation*}
where $S_n$ is the symmetric group. For any two unordered point sets $X, Y \in \widetilde{\mathcal{C}}_n^0(\mathbb{R}^d)$, we use their distance via the Wasserstein metric
\begin{equation*}
    d_{W,p}(X, Y) = \inf_{\gamma: X \to Y} \left( \sum_{x \in X} \|x - \gamma(x)\|^p \right)^{1/p},
\end{equation*}
where the infimum is taken over all bijections $\gamma: X \to Y$.

Our primary objective was to establish the stability of persistent magnitude homology with respect to perturbations of $X$. However, this poses significant challenges because magnitude homology inherently encodes fine-grained metric information, making it highly sensitive to geometric noise. The following example illustrates this instability.

\begin{example}
Consider a set of three points $X=\{x_1,x_2,x_3\}$ in $(\mathbb{R}^2,d_{\ell_2})$ forming a straight line segment, together with a perturbed configuration $Y$.
Let $x_1=(0,0)$, $x_2=(1,0)$, and $x_3=(2,0)$. The distances satisfy the strict additive relation
\[
d(x_1,x_3)=d(x_1,x_2)+d(x_2,x_3)=2.
\]
Hence the triple $(x_1,x_2,x_3)$ determines a nontrivial $1$-cycle in the magnitude chain complex $\mathrm{MC}_{1,2}(X)$. As a consequence, the first magnitude homology at length $\ell=2$ is nontrivial and
\[
\mathrm{rank}\bigl(\mathrm{MH}_{1,2}(X)\bigr)=1.
\]

Now consider a perturbed configuration $Y$ given by $y_1=(0,0)$, $y_3=(2,0)$, and $y_2=(1,\epsilon)$ with $\epsilon>0$. By the triangle inequality in Euclidean space,
\[
d(y_1,y_3)<d(y_1,y_2)+d(y_2,y_3),
\]
and more precisely $2<2\sqrt{1+\epsilon^2}$. Therefore, no triple of points in $Y$ satisfies an exact additive distance relation of total length $\ell=2$.

Since magnitude homology $\mathrm{MH}_{k,\ell}$ only counts chains whose distances sum exactly to $\ell$, the intermediarity of the middle point is destroyed immediately under perturbation. For any $\epsilon>0$, there are no nontrivial $1$-cycles in $\mathrm{MC}_{1,2}(Y)$, and hence
\[
\mathrm{rank}\bigl(\mathrm{MH}_{1,2}(Y)\bigr)=0.
\]
This example demonstrates that magnitude homology is not stable with respect to the Gromov--Hausdorff metric: an arbitrarily small perturbation of size $\epsilon$ can cause a discontinuous jump in the rank of magnitude homology.
\end{example}

In light of these difficulties, we shift our focus to the stability of the magnitude itself. Recall that the magnitude can be interpreted as a function of the scale $r$. For a given point set $X$, let $\magn_X: \mathbb{R} \to \mathbb{R}$ be the function that maps each $r$ to the magnitude of the subset $\mathcal{N}_r(X)$. 

In the following, we first establish an upper bound estimate for the Magnitude.

\begin{theorem}\label{theorem:magnitude_bound}
Let $X = \{x_1, \dots, x_n\} \subset B(c, L) \subset \mathbb{R}^d$ be a finite set of $n$ points contained in a ball of radius $L$. Then the magnitude of $X$ satisfies the upper bound
\begin{equation*}
\mathrm{Mag}(X) \le \frac{n}{1 + (n-1)e^{-2L}}.
\end{equation*}
\end{theorem}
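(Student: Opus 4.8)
The natural route is through the weighting description of magnitude. Since $X=\{x_1,\dots,x_n\}$ is a finite subset of $\mathbb{R}^d$, the similarity matrix $Z=\bigl(e^{-d(x_i,x_j)}\bigr)_{i,j}$ is positive definite (the kernel $e^{-\|x-y\|}$ is positive definite on Euclidean space), so there is a unique weighting $w=(w_1,\dots,w_n)$ with $Zw=\mathbf{1}$ and $\magn(X)=\sum_i w_i$. I would first extract a closed identity by summing the $n$ weighting equations $\sum_j e^{-d(x_i,x_j)}w_j=1$ over $i$ and isolating the diagonal contributions: writing $s_j:=\sum_{i\neq j}e^{-d(x_i,x_j)}$, this gives $\sum_j (1+s_j)w_j=n$, hence
\[
\magn(X)=\sum_j w_j = n-\sum_j s_j w_j .
\]

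The second step brings in the geometry. Because $X\subseteq B(c,L)$, the triangle inequality forces $d(x_i,x_j)\le 2L$ for all $i\neq j$, so $e^{-d(x_i,x_j)}\ge e^{-2L}$ and therefore $s_j\ge (n-1)e^{-2L}$ for every $j$. The third step is to combine these: if the weighting is non-negative, the identity above gives
\[
\magn(X)\le n-(n-1)e^{-2L}\sum_j w_j=n-(n-1)e^{-2L}\,\magn(X),
\]
and rearranging yields $\magn(X)\bigl(1+(n-1)e^{-2L}\bigr)\le n$, which is exactly the claim. The bound is sharp: equality holds for the regular configuration with all pairwise distances equal to $2L$, whose weighting is the constant vector $\tfrac{1}{1+(n-1)e^{-2L}}\mathbf{1}$.

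The delicate point, and the main obstacle, is the non-negativity of the weighting used in the third step; this is genuinely more subtle than positive definiteness of $Z$, since weightings of clustered configurations can have negative entries, so it must be cited from the literature or circumvented. A robust route that avoids it is to note that $\magn(X)=\mathbf{1}^{T}Z^{-1}\mathbf{1}=\bigl(\min_{\mathbf{1}^{T}u=1}u^{T}Zu\bigr)^{-1}$, so the desired inequality is equivalent to the matrix inequality $Z\succeq \tfrac{1+(n-1)e^{-2L}}{n}\,\mathbf{1}\mathbf{1}^{T}$. Decomposing $u=\bar u\,\mathbf{1}+u'$ with $u'\perp\mathbf{1}$ and setting $\alpha=\tfrac{1+(n-1)e^{-2L}}{n}$, one computes $u^{T}Zu-\alpha(\mathbf{1}^{T}u)^{2}=\beta\,\bar u^{2}+2\bar u\,(\Pi s)^{T}u'+(u')^{T}Zu'$, where $\beta=\sum_{i\neq j}\bigl(e^{-d(x_i,x_j)}-e^{-2L}\bigr)\ge 0$ and $\Pi s$ is the projection of $s=(s_j)$ onto $\mathbf{1}^{\perp}$; positivity of this form reduces to the Schur-complement estimate $\beta\ge (\Pi s)^{T}Z_{\perp}^{-1}(\Pi s)$ for the compression $Z_{\perp}$ of $Z$ to $\mathbf{1}^{\perp}$. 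Proving this estimate is the technical heart, and for the Euclidean case I would approach it through the subordination identity $e^{-\|x-y\|}=\tfrac{1}{\sqrt{\pi}}\int_0^\infty u^{-1/2}e^{-u}e^{-\|x-y\|^{2}/(4u)}\,du$, reducing $Z$ to an average of Gaussian Gram matrices.
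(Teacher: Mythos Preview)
Your primary route matches the paper's: both rest on the diameter bound $e^{-d(x_i,x_j)}\ge e^{-2L}$ together with non-negativity of the weighting, and the paper handles the latter exactly as you anticipated, by citation to \cite[Proposition~2.9]{meckes2013positive}. The only difference is packaging: the paper reaches the inequality via the variational characterization $\mathrm{Mag}(X)=\sup_{u\neq 0}(\mathbf{1}^{T}u)^{2}/u^{T}Zu$ and then bounds the denominator for non-negative $v$ using Cauchy--Schwarz $\sum v_i^{2}\ge\tfrac{1}{n}(\sum v_i)^{2}$, whereas your summed-weighting identity $\mathrm{Mag}(X)=n-\sum_j s_j w_j$ arrives at the same endpoint without Cauchy--Schwarz. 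Your Schur-complement alternative, aimed at circumventing the non-negativity hypothesis altogether, is not pursued in the paper and, as you acknowledge, would require substantial further work to complete.
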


\begin{proof}
First, we recall that the Euclidean space $(\mathbb{R}^m, d)$ is a metric space of negative type. According to Schoenberg's Theorem, the exponential kernel $k(x, y) = e^{-d(x, y)}$ is strictly positive definite on $\mathbb{R}^m$. Thus, the similarity matrix $Z$ defined by $Z_{ij} = e^{-d(x_i, x_j)}$ is a symmetric positive definite matrix \cite{schoenberg1938metric}.

For any finite metric space with a positive definite similarity matrix, by \cite{meckes2013positive}, the magnitude can be expressed via the variational formula
\begin{equation*}\label{eq:variational}
\mathrm{Mag}(X) = \sup_{u \in \mathbb{R}^n, u \neq 0} \frac{(\sum_{i=1}^n u_i)^2}{\sum_{i=1}^n \sum_{j=1}^n u_i u_j e^{-d(x_i, x_j)}}.
\end{equation*}

Let $Q(u) = \sum_{i,j} u_i u_j e^{-d(x_i, x_j)}$ be the quadratic form in the denominator. By \cite[Proposition 2.9]{meckes2013positive}, the weighting $u_i$ can be take to be nonnegative for each $1\leq i\leq n$. Given $x_i, x_j \in B(c, L)$, the maximum possible distance between any two points is the diameter of the ball, i.e., $d(x_i, x_j) \le 2L$. Since the function $e^{-x}$ is monotonically decreasing, we have
\begin{equation*}
e^{-d(x_i, x_j)} \ge e^{-2L}, \quad \forall i \neq j
\end{equation*}
Now, consider the denominator $Q(v)$ for $v \ge 0$, we obtain that
\begin{align*}
Q(v) &= \sum_{i=1}^n v_i^2 + \sum_{i \neq j} v_i v_j e^{-d(x_i, x_j)} \\
&\ge \sum_{i=1}^n v_i^2 + e^{-2L} \sum_{i \neq j} v_i v_j \\
&= \sum_{i=1}^n v_i^2 + e^{-2L} \left[ \left( \sum_{i=1}^n v_i \right)^2 - \sum_{i=1}^n v_i^2 \right] \\
&= (1 - e^{-2L}) \sum_{i=1}^n v_i^2 + e^{-2L} \left( \sum_{i=1}^n v_i \right)^2
\end{align*}
By the Cauchy-Schwarz inequality, one has
\begin{equation*}
  \sum_{i=1}^n v_i^2 \ge \frac{1}{n} (\sum_{i=1}^n v_i)^2.
\end{equation*}
It follows that
\begin{align*}
Q(v) &\ge \frac{1 - e^{-2L}}{n} \left( \sum_{i=1}^n v_i \right)^2 + e^{-2L} \left( \sum_{i=1}^n v_i \right)^2 \\
&= \frac{1 - e^{-2L} + n e^{-2L}}{n} \left( \sum_{i=1}^n v_i \right)^2 \\
&= \frac{1 + (n - 1)e^{-2L}}{n} \left( \sum_{i=1}^n v_i \right)^2
\end{align*}
Finally, substituting this lower bound for $Q(v)$ into the variational definition
\begin{equation*}
\mathrm{Mag}(X) \le \frac{(\sum v_i)^2}{\frac{1 + (n - 1)e^{-2L}}{n} (\sum v_i)^2} = \frac{n}{1 + (n - 1)e^{-2L}},
\end{equation*}
as desired.
\end{proof}

\begin{remark}
In the above estimates, when $L=0$ we have $\magn(X)=1$, and when $L=\infty$ we have $\magn(X)=n$, which is consistent with the absolute bounds for $\magn(X)$. Moreover, it is worth noting that \cite{willerton2014magnitude} gives an explicit formula for the magnitude of a sphere in terms of its radius, while \cite{meckes2015magnitude} shows that if $A\subseteq B$, then $\magn(A)\leq\magn(B)$. This implies that an upper bound for $\magn(X)$ can be expressed solely in terms of $L$. More precisely, for a finite point set $X$ in the two-dimensional Euclidean space, we have
\[
\magn(X)\leq1+2L+\frac{1}{2}L^2,
\]
and for a finite point set $X$ in the three-dimensional Euclidean space, we have
\[
\magn(X)\leq1+3L+\frac{3}{2}L^2+\frac{1}{6}L^3.
\]
\end{remark}

Next, we investigate the sensitivity of $\mathrm{Mag}(X)$ under small perturbations of the points in $X$. It is noteworthy that $\mathrm{Mag}(X)$ becomes highly sensitive to such perturbations as the distance between any two points in $X$ approaches zero. This singularity makes it challenging to maintain the stability of $\mathrm{Mag}(X)$ in a general setting. To address this, we restrict our analysis to point sets where the pairwise distances are bounded below by a threshold $\delta > 0$. This leads us to consider the \textit{thick configuration space}, defined as
\[
\mathcal{C}_n^{\delta}(\mathbb{R}^d) = \left\{ (x_1, \dots, x_n) \in (\mathbb{R}^d)^n \mid \|x_i - x_j\| \ge \delta, \text{ for } i \neq j \right\}.
\]
In some contexts, to eliminate translational invariance, we may further impose a barycentric constraint
\[
\mathcal{C}_n^{\delta}(\mathbb{R}^d) = \left\{ (x_1, \dots, x_n) \in (\mathbb{R}^d)^n \mid \sum_{i=1}^n x_i = 0, \|x_i - x_j\| \ge \delta, \text{ for } i \neq j \right\}.
\]
We denote this space as $\mathcal{C}_n^{\delta}(\mathbb{R}^d)$ in the subsequent stability analysis.

\begin{theorem}\label{theorem:diiference_bound}
Let $X, Y \in \mathcal{C}_n^{\delta}(\mathbb{R}^d)$ be two finite sets of $n$ points such that $\max_i \|x_i - y_i\|_2 \le \epsilon$. The variation in their Magnitude satisfies
\begin{equation*}
|\mathrm{Mag}(X) - \mathrm{Mag}(Y)| \le C_{n,d} \frac{\epsilon}{\delta},
\end{equation*}
where $C_{n,d}$ is a constant depending on the number of points $n$ and the dimension $d$.
\end{theorem}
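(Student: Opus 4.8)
The plan is to combine the matrix formula $\mathrm{Mag}(X)=\mathbf{1}^{T}Z_X^{-1}\mathbf{1}$ with the resolvent identity and a spectral lower bound forced by the separation hypothesis. Write $Z_X,Z_Y\in\mathbb{R}^{n\times n}$ for the similarity matrices; since distinct points of $\mathbb{R}^d$ form a positive definite metric space (Schoenberg's theorem, as recalled in the proof of Theorem~\ref{theorem:magnitude_bound}), both are symmetric strictly positive definite, hence invertible. Let $w_X=Z_X^{-1}\mathbf{1}$ and $w_Y=Z_Y^{-1}\mathbf{1}$ be the weightings. The first step is the identity
\[
\mathrm{Mag}(X)-\mathrm{Mag}(Y)=\mathbf{1}^{T}\bigl(Z_X^{-1}-Z_Y^{-1}\bigr)\mathbf{1}=w_X^{T}\,(Z_Y-Z_X)\,w_Y ,
\]
obtained from $Z_X^{-1}-Z_Y^{-1}=Z_X^{-1}(Z_Y-Z_X)Z_Y^{-1}$ and symmetry of $Z_X^{-1}$, so that $|\mathrm{Mag}(X)-\mathrm{Mag}(Y)|\le\|w_X\|_2\,\|Z_Y-Z_X\|_2\,\|w_Y\|_2$. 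It then remains to control the three factors.

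For the middle factor I would use the separation of both configurations: for $i\neq j$ one has $\|x_i-x_j\|\ge\delta$ and $\|y_i-y_j\|\ge\delta$, while $\bigl|\,\|x_i-x_j\|-\|y_i-y_j\|\,\bigr|\le\|x_i-y_i\|+\|x_j-y_j\|\le 2\epsilon$; since $t\mapsto e^{-t}$ is $1$-Lipschitz and bounded by $e^{-\delta}$ on $[\delta,\infty)$, this gives $|(Z_X)_{ij}-(Z_Y)_{ij}|\le 2\epsilon e^{-\delta}$ for $i\neq j$ and $0$ on the diagonal, hence $\|Z_Y-Z_X\|_2\le\|Z_Y-Z_X\|_F\le 2n\epsilon e^{-\delta}$. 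For the outer factors, note $\mathrm{Mag}(X)=w_X^{T}Z_Xw_X\ge\lambda_{\min}(Z_X)\|w_X\|_2^2$ together with the standard bounds $0<\mathrm{Mag}(X)\le n$, so $\|w_X\|_2^2\le n/\lambda_{\min}(Z_X)$, and likewise for $w_Y$. Combining,
\[
|\mathrm{Mag}(X)-\mathrm{Mag}(Y)|\;\le\;\frac{2n^{2}\,\epsilon\, e^{-\delta}}{\min\{\lambda_{\min}(Z_X),\,\lambda_{\min}(Z_Y)\}} .
\]

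The crux — and the step I expect to be the main obstacle — is a lower bound of the form $\lambda_{\min}(Z_X)\ge c_{n,d}\min\{\delta,1\}$, uniform over $X\in\mathcal{C}_n^{\delta}(\mathbb{R}^d)$. I would split into two regimes. If $\delta\ge\ln\!\bigl(2(n-1)\bigr)$, every off-diagonal entry of $Z_X$ is at most $e^{-\delta}$, so $\|Z_X-I\|_2\le(n-1)e^{-\delta}\le\tfrac12$ and $\lambda_{\min}(Z_X)\ge\tfrac12\ge\tfrac12\min\{\delta,1\}$. For $\delta$ below this threshold I would invoke a Narcowich--Ward-type estimate for the strictly positive definite kernel $e^{-\|x\|}$ on $\mathbb{R}^d$, whose Fourier transform decays like $(1+\|\xi\|^2)^{-(d+1)/2}$: this yields $\lambda_{\min}(Z_X)\ge c_d\,q_X$ with separation distance $q_X=\tfrac12\min_{i\neq j}\|x_i-x_j\|\ge\delta/2$ (and $\lambda_{\min}(Z_X)\ge\tfrac12$ once $q_X$ is large, which bridges the two regimes). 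Alternatively one can argue by hand: rescaling to a unit-separated configuration $P$ and expanding $Z_X=\mathbf{1}\mathbf{1}^{T}-\delta D_P+O(\delta^{2})$, the Euclidean distance matrix $D_P$ restricted to $\mathbf{1}^{\perp}$ is negative definite (by the negative-type property, $\sum_i v_i=0$ and $v\neq0$ force $\sum_{i,j}v_iv_j\|p_i-p_j\|<0$), with smallest eigenvalue bounded below uniformly over unit-separated $n$-point sets, so $\lambda_{\min}(Z_X)\ge c_{n,d}\,\delta$ for small $\delta$. Finally, since $e^{-\delta}/\min\{\delta,1\}\le 1/\delta$ for every $\delta>0$ (using $e^{\delta}\ge\delta$), substituting the spectral bound into the displayed inequality gives
\[
|\mathrm{Mag}(X)-\mathrm{Mag}(Y)|\;\le\;\frac{2n^{2}}{c_{n,d}}\cdot\frac{\epsilon}{\delta},
\]
which is the assertion with $C_{n,d}=2n^{2}/c_{n,d}$. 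Every step apart from the uniform spectral gap is a short computation; securing that gap with a clean $n,d$-dependent constant is where the real work lies.
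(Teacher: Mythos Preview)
Your proposal is correct and follows essentially the same route as the paper: the resolvent identity $Z_X^{-1}-Z_Y^{-1}=Z_X^{-1}(Z_Y-Z_X)Z_Y^{-1}$, an entrywise Lipschitz bound on $Z_X-Z_Y$, the estimate $\|w_X\|_2^2\le\mathrm{Mag}(X)/\lambda_{\min}(Z_X)=\mathrm{Mag}(X)\,\|Z_X^{-1}\|_2$, and a Narcowich--Ward separation bound for the smallest eigenvalue. The paper simply cites the inequality $\|Z_X^{-1}\|_2\le C_d/\delta$ from the scattered-data literature (Narcowich--Ward; Wendland, Ch.~12) rather than sketching the two-regime argument you outline, and it omits your extra $e^{-\delta}$ factor in the perturbation bound, but otherwise the arguments coincide.
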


\begin{proof}
Let $Z_X$ and $Z_Y$ be the interpolation matrices for the point sets $X$ and $Y$ respectively, using the exponential kernel $e^{-\|x_i - x_j\|}$. The Magnitude is defined as $\mathrm{Mag}(X) = \mathbf{1}^T Z_X^{-1} \mathbf{1}$. Using the resolvent identity $A^{-1} - B^{-1} = A^{-1}(B - A)B^{-1}$, we can write
\begin{equation*}
\begin{aligned}
\mathrm{Mag}(Y) - \mathrm{Mag}(X) &= \mathbf{1}^T (Z_Y^{-1} - Z_X^{-1}) \mathbf{1} \\
&= \mathbf{1}^T Z_Y^{-1} (Z_X - Z_Y) Z_X^{-1} \mathbf{1} \\
&= w_Y^T (Z_X - Z_Y) w_X,
\end{aligned} \label{eq:resolvent_mag}
\end{equation*}
where $w_X = Z_X^{-1} \mathbf{1}$ and $w_Y = Z_Y^{-1} \mathbf{1}$ are the corresponding weight vectors.

To begin with, we estimate the spectral norm of the perturbation matrix $E = Z_X - Z_Y$. Given that the exponential kernel $f(r) = e^{-r}$ is 1-Lipschitz on $[0, \infty)$, combining this property with the reverse triangle inequality yields
\[
|E_{ij}| = |e^{-\|x_i - x_j\|} - e^{-\|y_i - y_j\|}| \le |\|x_i - x_j\| - \|y_i - y_j\|| \le \|(x_i - y_i) - (x_j - y_j)\|.
\]
By the triangle inequality and the condition $\max_i \|x_i - y_i\| \le \epsilon$, we have $|E_{ij}| \le \|x_i - y_i\| + \|x_j - y_j\| \le 2\epsilon$. Consequently, as $E$ is a symmetric matrix, its spectral norm is bounded by its maximum absolute row sum, leading to
\begin{equation*}
\|Z_X - Z_Y\|_2 \le \|E\|_\infty = \max_{1 \le i \le n} \sum_{j=1}^n |E_{ij}| \le 2n\epsilon. \label{eq:matrix_bound}
\end{equation*}

Since $Z_X^{-1}$ is symmetric positive definite, we use its symmetric square root $Z_X^{-1/2}$ to decompose the squared $\ell^2$-norm
\[
\begin{aligned}
\|w_X\|_2^2 &= \|Z_X^{-1} \mathbf{1}\|_2^2 = \|Z_X^{-1/2} (Z_X^{-1/2} \mathbf{1})\|_2^2 \\
&\le \|Z_X^{-1/2}\|_2^2 \cdot \|Z_X^{-1/2} \mathbf{1}\|_2^2.
\end{aligned}
\]
By the spectral mapping theorem, $\|Z_X^{-1/2}\|_2^2 = \|Z_X^{-1}\|_2$. The second term is exactly the magnitude $\|Z_{X}^{-1/2} \mathbf{1}\|_2^2 = \mathbf{1}^T Z_X^{-1} \mathbf{1} = \mathrm{Mag}(X)$. Therefore
\begin{equation*}
\|w_X\|_2^2 \le \|Z_X^{-1}\|_2 \cdot \mathrm{Mag}(X). \label{eq:weight_bound}
\end{equation*}
The upper bound for the norm of the inverse matrix $\|Z_X^{-1}\|_2$ is established in \cite{narcowich1992norm}. Specifically, following the refined results in Chapter 12 of \cite{wendland2004scattered}, we have
\begin{equation*}
  \|Z_X^{-1}\|_2 \le C_d / \delta.
\end{equation*}
Combined with the fact that $\mathrm{Mag}(X) \le n$, we obtain
\[
\|w_X\|_2 \le \sqrt{\frac{C_d n}{\delta}}.
\]

Finally, applying the Cauchy-Schwarz inequality to the expression for the magnitude difference
\begin{equation*}
\begin{aligned}
|\mathrm{Mag}(Y) - \mathrm{Mag}(X)| &= |w_Y^T (Z_X - Z_Y) w_X| \\
&\le \|w_Y\|_2 \cdot \|Z_X - Z_Y\|_2 \cdot \|w_X\|_2 \\
&\le \sqrt{\frac{C_d n}{\delta}} \cdot (2n\epsilon) \cdot \sqrt{\frac{C_d n}{\delta}} \\
&= \frac{2n^2 C_d \epsilon}{\delta}.
\end{aligned}
\end{equation*}
By setting $C_{n,d} = 2n^2 C_d$, the proof is complete.
\end{proof}

Next, we investigate the stability of the magnitude for persistence finite metric space. For a point set $X$, let $\mathrm{Mag}_X(r): \mathbb{R} \to \mathbb{R}$ be the function given by
\begin{equation*}
    \mathrm{Mag}_X(r) = \mathrm{Mag}(\mathcal{N}_r(X)),
\end{equation*}
where $\mathcal{N}_r(X)$ is the persistence finite metric space as defined previously. This function, often referred to as the \textit{magnitude profile} of $X$, captures the geometric evolution of the point set across different scales $r$.

To quantify the similarity between the magnitude profiles of two point sets $X$ and $Y$, we employ the truncated $L^2$ distance. For two square-integrable functions $f, g \in L^1([0, L])$, their distance is defined as
\begin{equation*}
    d_{L}(f, g) = \int_{0}^{L} |f(r) - g(r)| \, dr ,
\end{equation*}
where $L \in (0, \infty]$ is a fixed constant representing the maximum observation scale. In the context of our stability analysis, we consider $f(r) = \mathrm{Mag}_X(r)$ and $g(r) = \mathrm{Mag}_Y(r)$.

\begin{theorem}\label{theorem:stability}
Let $X, Y \in \mathcal{C}_n^{\delta}(\mathbb{R}^d)$ be two finite point sets contained within a disk of radius $L$. Then the distance between their magnitude profiles satisfies
\begin{equation*}
    d_{L}(\mathrm{Mag}_X(r), \mathrm{Mag}_Y(r))  \leq K_{n,d,L,\delta} \, d_{W,\infty}(X, Y),
\end{equation*}
where $K_{n,d,L,\delta}$ is a constant depending on $n, d, L, \delta$.
\end{theorem}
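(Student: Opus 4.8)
The plan is to integrate the pointwise magnitude bound of Theorem~\ref{theorem:diiference_bound} over the scale parameter $r \in [0,L]$. The key observation is that the $\infty$-Wasserstein distance $d_{W,\infty}(X,Y) = \inf_\gamma \max_{x} \|x - \gamma(x)\|$ gives, via an optimal (or near-optimal) bijection $\gamma$, a uniform bound $\max_i \|x_i - y_i\| \le \epsilon$ with $\epsilon$ as close to $d_{W,\infty}(X,Y)$ as we like. First I would fix such a bijection and re-index $Y$ accordingly. The subtlety is that at a given scale $r$, the neighborhoods $\mathcal{N}_r(X)$ and $\mathcal{N}_r(Y)$ need not contain corresponding points: a point $x_i$ with $\|x_i - u_X\| \le r$ may be matched to $y_i$ with $\|y_i - u_Y\| > r$. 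So $\mathcal{N}_r(X)$ and $\mathcal{N}_r(Y)$ are finite point sets that are \emph{close} but not of the same cardinality in general.

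To handle this, I would argue as follows. Since the barycenter map is $1$-Lipschitz (indeed $\|u_X - u_Y\| \le \frac{1}{n}\sum_i \|x_i - y_i\| \le \epsilon$), a point lying in $\mathcal{N}_r(X)$ but whose partner is excluded from $\mathcal{N}_r(Y)$ must satisfy $r - 2\epsilon < \|x_i - u_X\| \le r$ (roughly), i.e.\ it lies in a thin annular shell of width $O(\epsilon)$. For each such ``boundary'' point one can compare $\mathcal{N}_r(Y)$ with $\mathcal{N}_r(Y) \cup \{\text{added partner}\}$ or deletion of the offending point; since both configurations lie in a thick configuration space with separation $\ge \delta$ inside a ball of radius $L$, the change in magnitude caused by adding or deleting a single point is bounded by a constant $C'_{n,d,L,\delta}$ (using Theorem~\ref{theorem:magnitude_bound} together with the inverse-matrix norm bound $\|Z^{-1}\|_2 \le C_d/\delta$ from \cite{wendland2004scattered}, exactly as in the proof of Theorem~\ref{theorem:diiference_bound}). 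After reconciling the cardinalities in this way, the remaining comparison is between two equal-size point sets with $\max_i$-displacement $\le \epsilon$, so Theorem~\ref{theorem:diiference_bound} yields $|\mathrm{Mag}_X(r) - \mathrm{Mag}_Y(r)| \le C_{n,d}\,\epsilon/\delta + (\text{cardinality-mismatch terms})$ pointwise in $r$.

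Integrating over $[0,L]$, the main term contributes $L \cdot C_{n,d}\,\epsilon/\delta$, which is linear in $\epsilon$. For the cardinality-mismatch terms, the crucial point is that the set of $r$ for which a \emph{given} index $i$ is a boundary point has Lebesgue measure $O(\epsilon)$ (it is an interval of length comparable to $\big|\,\|x_i-u_X\| - \|y_i-u_Y\|\,\big| \le 2\epsilon$), and there are at most $n$ indices; hence the contribution of these terms to the integral is also $O(n\epsilon \cdot C'_{n,d,L,\delta})$, again linear in $\epsilon$. Collecting constants and letting $\epsilon \to d_{W,\infty}(X,Y)$ gives the claimed bound with $K_{n,d,L,\delta} = L\,C_{n,d}/\delta + n\,C'_{n,d,L,\delta}$ (or any convenient such combination). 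The main obstacle is the bookkeeping for the cardinality mismatch: one must argue carefully that adding or removing a single point from a thick, bounded configuration perturbs the magnitude by a bounded amount, and that this only happens on a set of scales of measure $O(\epsilon)$ per point; everything else is a routine application of the already-established pointwise estimates. A minor additional check is that $\mathcal{N}_r(X)$ genuinely lies in $\mathcal{C}_m^\delta(\mathbb{R}^d)$ for the appropriate $m \le n$, which is immediate since subsets of $\delta$-separated sets are $\delta$-separated, and they sit inside a ball of radius $L$ by hypothesis, so Theorem~\ref{theorem:magnitude_bound} applies uniformly.
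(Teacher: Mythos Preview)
Your proposal is correct and follows the same strategy as the paper: split $\int_0^L$ into a good region where $|\mathcal{N}_r(X)| = |\mathcal{N}_r(Y)|$ and Theorem~\ref{theorem:diiference_bound} applies, and a cardinality-mismatch region of total measure $O(n\epsilon)$. On the latter the paper just invokes the crude bound $|\mathrm{Mag}_X(r) - \mathrm{Mag}_Y(r)| \le n$ (so your separate single-point insertion/deletion constant $C'_{n,d,L,\delta}$ is unnecessary), while your explicit tracking of the Wasserstein bijection is slightly more careful than the paper about verifying the $\epsilon$-displacement hypothesis of Theorem~\ref{theorem:diiference_bound} on the good region.
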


\begin{proof}
Let $\epsilon = d_{W,\infty}(X, Y)$. By the definition of the $\infty$-Wasserstein distance, there exists a bijection between $X$ and $Y$ such that the maximum displacement is $\epsilon$. Without loss of generality, let $r_1 \leq r_2 \leq \cdots \leq r_n$ be the ordered distances from points in $X$ to its barycenter $u_X$, and let $r'_1 \leq r'_2 \leq \cdots \leq r'_n$ be the corresponding ordered distances for $Y$. Since the displacement of each point is bounded by $\epsilon$, we have $|r_i - r'_i| \leq \epsilon$ for all $i=1, \dots, n$.

We partition the interval $[0, L]$ using the radii. Let $r_0 = r'_0 = 0$ and $r_{n+1} = r'_{n+1} = L$. For each $i \in \{1, \dots, n+1\}$, define the intervals $I_i = [r_{i-1}, r_i)$ and $I'_i = [r'_{i-1}, r'_i)$. Within $I_i$ (resp. $I'_i$), the persistent subsets $\mathcal{N}_r(X)$ (resp. $\mathcal{N}_r(Y)$) contain exactly $i-1$ points. 

Let $\Omega_i = I_i \cap I'_i$ be the overlap where both sets have the same cardinality. The length of each overlap $\omega_i = |\Omega_i|$ satisfies
\begin{equation*}
    \omega_i \geq (r_i - r_{i-1}) - |r_i - r'_i| - |r_{i-1} - r'_{i-1}| \geq (r_i - r_{i-1}) - 2\epsilon.
\end{equation*}
(For the boundary cases $i=1$ and $i=n+1$, the length is at least $(r_i - r_{i-1}) - \epsilon$). Summing these lengths, the total measure of the set $\Omega = \bigcup_{i=1}^{n+1} \Omega_i$ where cardinalities match is
\begin{equation*}
    |\Omega| = \sum_{i=1}^{n+1} \omega_i \geq \sum_{i=1}^{n+1} (r_i - r_{i-1}) - 2n\epsilon = L - 2n\epsilon.
\end{equation*}

We decompose the $L^1$ distance between the magnitude profiles by partitioning the integral over the observation window $[0, L]$ into the matching region $\Omega$ and its complement $\Omega^c = [0, L] \setminus \Omega$. Then we have
\begin{align*}
   & \int_{0}^{L} |\mathrm{Mag}_X(r) - \mathrm{Mag}_Y(r)| \, dr \\
    =& \int_{\Omega} |\mathrm{Mag}(\mathcal{N}_r(X)) - \mathrm{Mag}(\mathcal{N}_r(Y))| \, dr + \int_{\Omega^c} |\mathrm{Mag}_X(r) - \mathrm{Mag}_Y(r)| \, dr.
\end{align*}

For the first integral, recall that for any $r \in \Omega$, both $\mathcal{N}_r(X)$ and $\mathcal{N}_r(Y)$ contain exactly the same number of points, with a minimum separation distance at least $\delta$. Applying Theorem \ref{theorem:diiference_bound}, the integrand is uniformly bounded by
\[
\sup_{r \in \Omega} |\mathrm{Mag}(\mathcal{N}_r(X)) - \mathrm{Mag}(\mathcal{N}_r(Y))| \le \frac{C_{n,d} \epsilon}{\delta}.
\]
Integrating this constant over the region $\Omega$, one has
\begin{equation*}
    \int_{\Omega} |\mathrm{Mag}(\mathcal{N}_r(X)) - \mathrm{Mag}(\mathcal{N}_r(Y))| \, dr \le L \cdot \frac{C_{n,d} \epsilon}{\delta}. \label{eq:int_omega}
\end{equation*}

For the second integral over the mismatching region $\Omega^c$, by Theorem \ref{theorem:magnitude_bound}, we obtain
\[
|\mathrm{Mag}_X(r) - \mathrm{Mag}_Y(r)| \le n.
\]
Using the measure bound $|\Omega^c|\leq 2n\epsilon$, we have
\begin{equation*}
    \int_{\Omega^c} |\mathrm{Mag}_X(r) - \mathrm{Mag}_Y(r)| \, dr \le |\Omega^c| \cdot n \le (2n\epsilon) \cdot 2n = 2n^2\epsilon. \label{eq:int_omegac}
\end{equation*}
Combining the above estimate, we get
\begin{equation*}
    \int_{0}^{L} |\mathrm{Mag}_X(r) - \mathrm{Mag}_Y(r)| \, dr  \leq L \cdot \frac{C_{n,d} \epsilon}{\delta} + 2n\epsilon \cdot n = \left( \frac{C_{n,d} L}{\delta} + 2n^2 \right) \epsilon.
\end{equation*}
By setting $K_{n,d,L,\delta} = \frac{C_{n,d} L}{\delta} + 2n^2$, we conclude that $d_{L}(\mathrm{Mag}_X, \mathrm{Mag}_Y) \leq K_{n,d,L,\delta} \, \epsilon$.
\end{proof}

It is important to observe that, although Theorem \ref{theorem:stability} provides an explicit error bound for the perturbation of the magnitude profile, the stability constant $K_{n,d,L,\delta}$ diverges as the minimum separation distance $\delta$ tends to zero. This implies that the stability of the magnitude profile is inherently conditional, contingent upon the geometric separation of the point set. Such a phenomenon reflects the transition from a well-posed regime to a singular state: as points approach collision ($\delta \to 0$), the interpolation matrix becomes increasingly ill-conditioned, leading to a loss of Lipschitz continuity in the magnitude profile. Consequently, in practical applications involving noisy data, a sufficient separation distance must be maintained to ensure the robustness of the magnitude as a geometric descriptor.

\section*{Funding information}

This work was are supported by the Scientific Research Foundation of Chongqing University of Technology, the High-Level Scientific Research Foundation of Hebei Province, the Start-up Research Fund of BIMSA and a grant from SIMIS (with project titled ``New type of topology and applicaitions based on GLMY theory", 2025), Beijing Natural Science Foundation (International Scientists, Project Grant No.S25081), National Natural Science Foundation of China (NSFC) (General Program, Grant No.82573048), and Start-up Research Fund of University of North Carolina at Charlotte.

\section*{Conflict of interest}

The authors declare no competing interests.

\bibliographystyle{plain}
\bibliography{Reference}

\end{document}